\newtheorem{lemma}{Lemma}
\def\cond{ {\scriptscriptstyle{\bullet}}  }
\def\e{ {\rm e}}
\def\bfA{ {\boldsymbol A} }
\def\bfB{ {\boldsymbol B} }
\def\bfC{ {\boldsymbol C} }
\def\bfF{ {\boldsymbol F} }
\def\bfG{ {\boldsymbol G} }
\def\bfI{ {\boldsymbol I} }
\def\bfJ{ {\boldsymbol J} }
\def\bfs{ {\boldsymbol s} }
\def\bfS{ {\boldsymbol S} }
\def\bfT{ {\boldsymbol T} }
\def\bfX{ {\boldsymbol X} }
\def\deltt{\Delta_{\rm t}}
\def\EqualDist{\mathrel{\mathop=^{\rm d}}}
\def\EqualDef{\mathrel{\mathop=^{\rm def}}}
\def\hbS{ {\hat{\bfS}} }
\newtheorem{remark}{Remark}
\def\eye{ {\rm i}}
\def\fN{f^{}_{\cal N}}
\def\EqualDist{\mathrel{\mathop=^{\rm d}}}
\def\DefEq{\mathrel{\mathop=^{\rm def}}}
\def\cov{\mathop{\rm cov}\nolimits}
\def\tr{ \mbox{tr} }
\def\fN{f^{}_{\cal N}}
\begin{document}


\title{Partial Coherence Estimation via Spectral Matrix Shrinkage under Quadratic Loss}

\author{D.~Schneider-Luftman, {\it Student Member, IEEE} and A.~T.~Walden, {\it Senior~Member, IEEE}\thanks{
Copyright (c) 2015 IEEE. Personal use of this material is permitted. However, permission to use this material for
any other purposes must be obtained from the IEEE by sending a request to pubs-permissions@ieee.org. 
Deborah Schneider-Luftman and Andrew Walden
are both at the Department of Mathematics, Imperial College  London, 180 Queen's Gate,
London SW7 2BZ, UK.  
(e-mail: deborah.schneider-luftman11@imperial.ac.uk and a.walden@imperial.ac.uk)
}}

\maketitle

\begin{abstract} 
Partial coherence is an important quantity derived from spectral or precision matrices and is used in seismology, meteorology, oceanography, neuroscience and elsewhere.
If the number of complex degrees of freedom only slightly exceeds the dimension of the multivariate stationary time series, spectral matrices are poorly conditioned and shrinkage techniques suggest themselves. When true partial coherencies are quite large then for shrinkage estimators of the diagonal weighting kind it is shown empirically that the minimization of risk using quadratic loss (QL) leads to oracle partial coherence estimators  superior to those derived by minimizing risk using Hilbert-Schmidt (HS) loss. When true partial coherencies are small the methods behave similarly. We derive two new QL estimators for spectral matrices, and new QL and HS estimators for precision matrices. In addition for the full estimation (non-oracle) case where certain trace expressions must also be estimated, we examine the behaviour of three different QL estimators, the precision matrix one seeming particularly robust and reliable. For the empirical study we  carry out exact simulations derived from real EEG data for two individuals, one having large, and the other  small, partial coherencies. This ensures our study covers cases of real-world relevance.

\end{abstract}

\begin{keywords} partial coherence, quadratic loss, shrinkage, precision matrix, spectral matrix.
\end{keywords}


\section{Introduction}\label{sec:introduction}
Consider a  $p$-vector-valued (or multivariate) stationary time series $\{ {\bfX}_t
\}$ where
${\bfX}_t=[X_{1,t},\ldots,X_{p,t}]^T\in {\mathbb R}^p, \,t\in{\mathbb Z},$
and $^T$ denotes transposition. Without loss of generality we assume $\{\bfX_t\}$ to have a zero mean. Denote the sample interval by $\deltt.$ One very important quantity derived from vector-valued time series is the partial coherence between different series. 
Let $\bfS(f)$ denote the spectral matrix of $\{ \bfX_t\}$ at frequency $f,$
assumed to exist and be of full rank. With $\bfs_\tau {\displaystyle\EqualDef} \cov\{ \bfX_{t+\tau}, \bfX_t\}=
E\{ {\bfX}_{t+\tau} {\bfX}_t^T \},$ we have
$
\bfS(f){\displaystyle\EqualDef}\deltt \sum_{\tau=-\infty}^\infty {\bfs}_{\tau}
\,\e^{-{\eye}2\pi f \tau \,\deltt}. 
$
Denote the $(j,k)^{\rm th}$ element of the precision matrix $\bfC(f){\displaystyle\EqualDef}\bfS^{-1}(f)$ by $C_{jk}(f).$
The partial coherence between series $j$ and $k$ can be expressed as,
(e.g., \cite{Dahlhaus00}),
$
\gamma^2_{jk\cond\{\setminus jk\}}(f) 
= {|C_{jk}(f)|^2 / [C_{jj}(f) C_{kk}(f)] },
$
and is the frequency  domain squared correlation coefficient between series $j$ and $k$ {\it after the removal of the linear effects of the remaining series,} the remaining series being denoted by $\{\setminus jk\}.$ This characteristic has led to partial coherence being used widely in the physical sciences, e.g., in seismology \cite{White84}, meteorology \cite{Fraser77}, oceanography 
\cite{Koutitonsky_etal02} and extensively in neuroscience \cite{Fiecasetal10,Larsen_etal00,Mimaetal00,Salvador05,Medkouretal09}. Clearly to calculate the partial coherencies we can estimate $\bfS(f)$ as $\hbS(f),$ and invert it, or we can estimate $\bfC(f)$ directly.

Consider an  
estimator $\hbS(f),$ of $\bfS(f).$  An estimator may be computed by a multitaper scheme involving $K$ tapers (e.g., \cite{PercivalWalden93}).
(Throughout this paper we assume $K\ge p$ so that the spectral matrix is non-singular with probability one.)
The spectral matrices will be non-singular but poorly conditioned  if 
$K$ is only a little larger than $p$.  The derived partial coherencies will reflect this ill-conditioning. This study was motivated by exactly this problem in a neuroscience setting. Due to required low-pass filtering only a small frequency range was available for analysis and $K$ was necessarily kept small --- see Section~\ref{sec:EEGapp}. 
$K$ cannot be simply increased because of its connection to the implied smoothing bandwidth:  if $K$ is made larger, the required resolution may be lost. (Other estimators such as periodograms smoothed over frequencies have analogous properties.) 

Can the mean-square errors of the resulting estimated partial coherencies 
be reduced? Such a reduction would be very useful: for example, the estimated partial mutual information obtained from the  estimated partial coherencies are used in determining brain functional connectivity \cite{Salvador05} so that increased precision is scientifically well worthwhile.
An obvious approach is to use covariance shrinkage methodology, a subject with a large literature, see e.g., 
\cite{EfronMorris76,Haff80,Stein75,SchaferStrimmer05,FisherSun11,ChenWangMcKeown12}. 

Ledoit and Wolf (LW) \cite{LedoitWolf04} derived ideal shrinkage estimators which are a combination of the standard covariance matrix and a target matrix proportional to the identity; such diagonal up-weighting has a long history \cite{Carlson88} and we concentrate on this estimator class in this paper. 
LW minimize a risk measure --- defined in terms of Hilbert-Schmidt (HS)  or Frobenius loss --- between the true and estimated covariance matrices.  The ideal  parameter value for shrinking ${\hat\bfS}(f)$ is easily estimated. While convenient mathematically, Daniels and Kass \cite[p.~1174]{DanielsKass01} noted that use of the HS loss function can result in ``overshrinkage of the eigenvalues, especially the small ones.'' As we shall see, this warning is well-founded in terms of the estimation of partial coherencies: the LW estimator for shrinking ${\hat\bfS}(f)$ wipes out large partial coherencies.
In this paper we study alternatives to the LW-type estimator based on using the quadratic loss (QL) \cite{JamesStein61} rather than HS loss. 
QL was exploited \cite{Konno09} in shrinkage estimation for large dimensional covariance matrices but concentrating on singular estimators, with quite different estimators to those considered here.

We also look at estimating the precision matrix directly, rather than first inverting the estimated spectral matrix. \cite{WangPanTongZhu15} used
QL for precision matrix shrinkage in the context of large dimensional covariance matrices and derived estimators via random matrix theory.  Here, we address this problem without requiring large dimensionality, and without recourse to random matrix theory.
A weighted combination of the estimated inverse covariance matrix  and the identity was considered by \cite{EfronMorris76,Haff79}, and forms the basis for our approach, using both  the HS and QL losses. 

Our simulations are based on real 
electroencephalogram (EEG) time series data $(p=10),$  which in fact motivated this study. Both large and small partial coherencies are present and the data is perfect for demonstrating the behaviour  
of the various estimators. The simulations use the exact  circulant embedding methodology for vector-valued time series \cite{ChandnaWalden13}.

\subsection{Contributions}
Following some background on spectral estimation and the EEG data, the contributions of this paper are:

\begin{enumerate}
\item
We derive two  shrinkage estimators, QLa and QLb, for ${\hat \bfS}(f)$ under QL loss in closed form. One involves just a single shrinkage parameter, while the other has two parameters.
\item
The resulting oracle estimates of partial coherencies are compared to those from the LW scheme in terms of  the percentage relative improvement in squared error over that of the raw estimator. The LW scheme shrinks partial coherencies towards zero in a manner that renders it unreliable in practice when  any significant magnitude partial coherencies are present. The QL-based approaches perform much more robustly.
\item 
We next develop two-parameter  shrinkage estimators, HSP and QLP, for the spectral precision matrix ${\bfC}(f){\displaystyle\EqualDef}{\bfS}^{-1}(f)$ under both HS and QL loss, respectively, both in closed form.
The resulting oracle estimates of partial coherencies are compared to each other and to those of the other shrinkage estimators discussed above. 
\item
As a result
of 3. above, QLa, QLb and QLP estimators are further considered in the real-world --- non-oracle --- setting where the trace terms have also to be estimated and thence renamed QLa-est, QLb-est and QLP-est. It is found that  QLP-est
behaves in a very appealing robust way for both high and low true coherencies and is recommended as our preferred shrinkage method.
\end{enumerate}

Section~\ref{sec:backg} discusses the background spectral estimation, while 
Section~\ref{sec:EEGapp} outlines the necessary preprocessing of the 
EEG data and our simulation from it, showing the different partial coherence profiles for two individuals utilised in this work. 
Section~\ref{sec:conventional} introduces our two new QL-based shrinkage estimators for the spectral matrix. The standard HS-based shrinkage estimator is contrasted with the new QL-based estimators in Section~\ref{sec:egoracle} in terms of eigenvalue adjustment, shrinkage parameters and accuracy in estimating the partial coherencies.
New QL and HS estimators for precision matrices are derived in Section~\ref{sec:shrinkP} and the resultant partial coherence estimation is  analysed.
Since the QL approach has outperformed HS for the oracle estimators only the three estimators QLa, QLb and QLP are examined in Section~\ref{sec:nonoracle} where the full (non-oracle) estimators are examined, leading to QLP-est being our recommended approach.
Detailed derivations of all our new estimators are given in the Appendix.

\section{Spectral Matrix Estimation}  \label{sec:backg}
\subsection{Multitapering}

We make use of a set of $K\geq p$ orthonormal tapers $\{h_{k,t}\}, k=0,\ldots,K-1.$
A simple set are the sine tapers
\cite{RiedelSidorenko95}.
The elements of the $k^{\rm th}$ sinusoidal taper, are given by
$$ 
h_{k,t} = \left[\frac{2}{N+1}\right]^{1/2} \!\sin\,\left[ \frac{(k+1)\pi (t+1)}
 {N+1} \right],\,\, t=0,\ldots,N-1.
$$ 
For $t=0,\ldots,N-1,$
form the product
$h_{k,t} {\bfX}_t$ of the $t$th component of the $k$th taper with the $t$th component of the $p$-vector-valued
process, and for $k=0,\ldots,K-1$ compute the  vector Fourier transform
$
{\bfJ}_{k}(f)
{\displaystyle\EqualDef}
\deltt^{1/2}\sum_{t=0}^{N-1} h_{k,t} {\bfX}_t \,\e^{-\eye 2\pi ft\,\deltt}.
$
Let $\bfJ(f)$ be the $p\times K$ matrix defined by
$
\bfJ(f)=[\bfJ_0(f),\ldots,\bfJ_{K-1}(f)].
$
Then with ${\hbS}_k(f){\displaystyle{\EqualDef}}{\bfJ}_k(f) {\bfJ}^H_k(f)$the multitaper estimator of the $p\times p$ spectral matrix ${\bfS}(f)$ 
is
\begin{eqnarray}
{\hat {\bfS}} (f) &=&
\frac{1}{K} \sum_{k=0}^{K-1}  {\hbS}_k(f) =
\frac{1}{K} \bfJ(f)\bfJ^H(f).
\label{eq:eqnSHRdd}
\end{eqnarray}

Letting $B$ denote the bandwidth of the spectral window corresponding to the tapering, then
${\bfJ}_{k}(f), k=0,\ldots,K-1,$  
may be taken to be  independently and 
identically distributed 
as $p$-vector-valued complex Gaussian with mean zero and covariance matrix $\bfS(f):$
\begin{equation}
{\bfJ}_{k}(f)
 \EqualDist
{\cal N}_p^C\{{\bf 0},{\bfS}(f)\},
\label{eq:eqnSHRee}
\end{equation}
for $B/2< |f|< \fN-B/2$ for finite $N$ and Gaussian processes, or $0<|f|<\fN$ asymptotically \cite{ChandnaWalden11}. 
The bandwidth $B$ for sine tapers is given by
$
B=(K+1)/[(N+1)\deltt],
$
(e.g., \cite{Waldenetal95}). The choice of $K$ is therefore linked to the bandwidth chosen for the spectral window.
It is assumed to have been chosen narrow enough 
to ensure the components of ${\bfS}(f)$ are essentially constant across it. 
${\hat\bfS}(f)$ in  (\ref{eq:eqnSHRdd}) is the maximum-likelihood estimator for $\bfS(f),$ \cite{Goodman63}.

Given (\ref{eq:eqnSHRee}),  and with $K\geq p,$
$K\hbS(f)$ has the complex Wishart distribution with mean $K\bfS(f),$
written as
\begin{equation}
K\hbS(f)
 \EqualDist
{\cal W}_p^C\{K,{\bfS}(f)\}.
\label{eq:eqnWish}
\end{equation}
Therefore,
\begin{equation}
E\{ {\hat {\bfS}} (f)\}\!=\bfS(f) \quad \mbox{and}\quad E\{ \tr\{ {\hat{\bfS}}\} \}=\tr\{\bfS\},
\label{eq:eqnSHRll}
\end{equation}
two simple results which will be made use of. Other properties following from (\ref{eq:eqnWish}) will be introduced where appropriate.

\section{Application to EEG data}\label{sec:EEGapp}
We shall illustrate the methodology via data simulated from real 
electroencephalogram (EEG) data, (resting conditions with eyes closed),  \cite{Medkouretal10}.

\subsection{Preprocessing}
Real EEG signals were recorded on the scalp  at $10$  sites, 
using a bandpass filter of 0.5--45Hz and sample interval of $\deltt=0.01$s. 
The recorded data $\{\bfX_t\}$ is thus a $p=10$ vector-valued process, with Nyquist frequency 50Hz.
To fully remove any influence of the highly dominant and contaminating 10Hz alpha rhythm, which would otherwise cause severe spectral leakage,  a 4.6Hz low-pass filter was applied,  
followed by resampling to a sample interval of $\deltt=0.05$s giving a new Nyquist frequency of $\fN=10$Hz. 
After this downsampling each channel of data had $N=1024$ time series values.
The interesting EEG delta frequency range, $0.5< f \leq 4$ Hz \cite{Medkouretal10} should be reliably represented after this preprocessing.


\subsection{Simulation Strategy}\label{subsec:simulation}
From this 10-channel series  the spectral matrix $\bfS(f)$ was estimated as ${\bfS}_0(f),$ say, for $|f|\leq \fN.$ 
Using the vector-valued circulant embedding approach, 
\cite{ChandnaWalden13},
a large number, $M$ say, of independent Gaussian 10-channel time series were computed, each having  $\bfS_0(f),\, |f|\leq \fN,$ as its true spectral matrix. 
For each of these time series the estimated spectral matrix $\hbS(f)$  was computed using multitaper estimation with some specified number $K$ of sine tapers. These $M$ independent estimates of $\bfS_0(f)$ can then be used  to deduce various sampling properties for quantities derived from ${\hat\bfS}(f).$ As mentioned in the Introduction, we will look at cases where $K$ just exceeds $p,$ so that ill-conditioning is present. 

There are $p(p-1)/2=45$ partial coherencies as a function of frequency in this case. Fig.~\ref{fig:samplepcs} shows
the first 9 partial coherency plots ($j=1,k=2:10)$ for  individual 1 calculated from  $\bfS_0(f).$ These will be referred to as the true partial coherencies (that we are trying to estimate from the simulated data). We can see that there is a good range of values from nearly zero up to around 0.8. We shall also look at true partial coherencies for individual 2 who has most partial coherencies close to zero (the `sparse' case) --- see Fig.~\ref{fig:samplepcs6}. The two cases are summarized in Fig.~\ref{fig:avpcoh} which shows that for individual one there are `spikes' of high true partial coherence  around 2 and 3.25 Hz. These will be significant for our partial coherence estimators.

Unless stated otherwise, results  apply to individual 1.


\begin{figure}[t]
\centering
\includegraphics{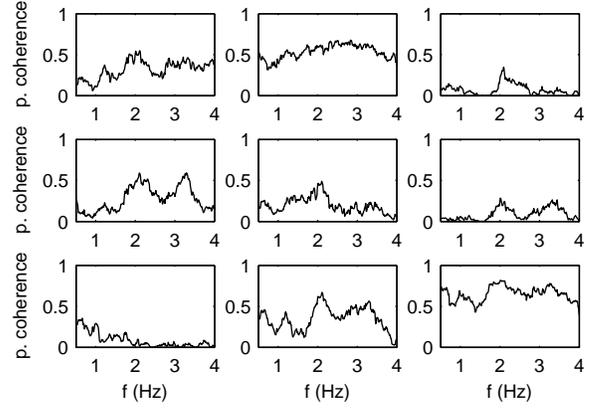}
\caption{The first 9 partial coherency plots ($j=1,k=2:10)$ for  individual one, found from the known matrix $\bfS_0(f).$
}
\label{fig:samplepcs}
\end{figure}
\begin{figure}[t]
\centering
\includegraphics{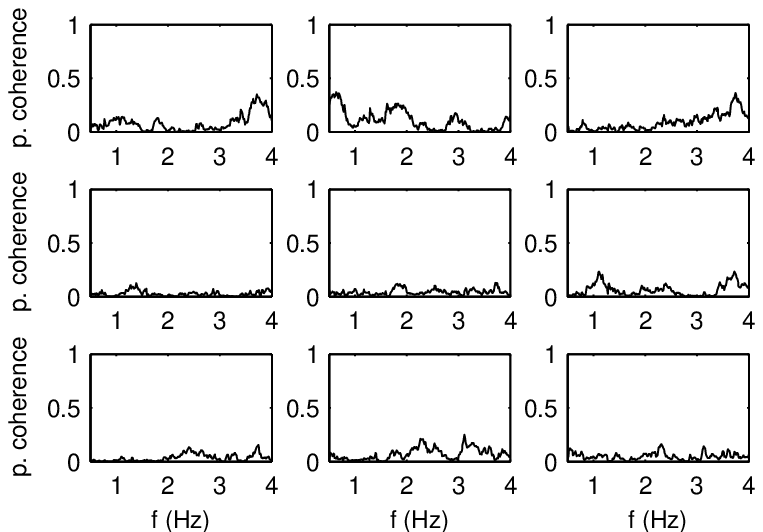}
\caption{The first 9 partial coherency plots ($j=1,k=2:10)$ for  individual two, found from the known matrix $\bfS_0(f).$
}
\label{fig:samplepcs6}
\end{figure}

\begin{figure}[t]
\centering
$
\begin{matrix}
\includegraphics{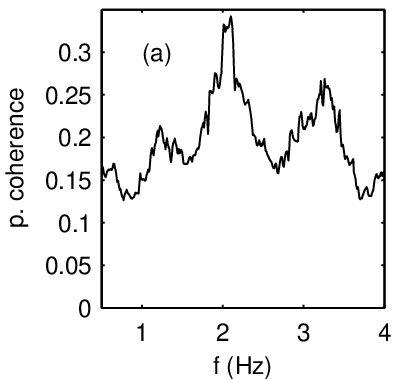}
\includegraphics{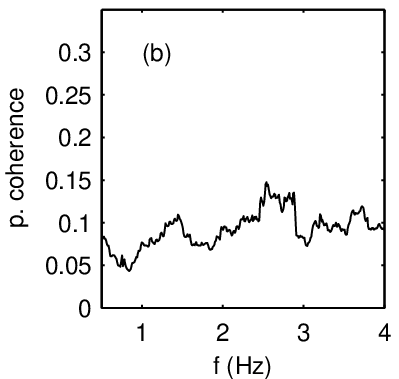}\\
\end{matrix}
$
\caption{Averaged true partial coherencies for (a) individual one, and (b) individual two, (averaging over 
all pairs $(j,k), 1\leq j<k\leq p$).
}
\label{fig:avpcoh}
\end{figure}

\section{Shrinking the Spectral Matrix}\label{sec:conventional}
\subsection{Conventional Approach}
The conventional
approach to `covariance matrix' regularization which has been extensively studied involves the forming of a convex combination of the sample covariance matrix and some well-conditioned `target' matrix. For an estimated $p\times p$ Hermitian spectral matrix ${\hat{\bfS}}(f)$ this would take the form
$
{\bfS}^{\star}(\rho(f))=(1-\rho(f)) {\hat{\bfS}}(f)+\rho(f){{\bfT}}(f),
$
where $\rho(f)\in(0,1)$ is known as the shrinkage parameter and ${\hat{\bfT}}(f)$ is the target matrix. Provided 
${\hat {\bfS}}(f)$ and ${\hat{\bfT}}(f)$ are both positive definite, then this convex combination   will itself be positive definite.

For notational brevity we shall drop the explicit frequency dependence in most of what follows.

The shrinkage coefficient $\rho$ is set such that it minimizes a risk criterion for some given loss ${\mathcal L},$ say, of $ \hat{\bfS}^{\star}(\rho)$ to the true matrix $\bfS:$ 
\begin{equation}\label{eq:basicShrinkage}
\rho_0 = \underset{\rho \in (0,1)}{\arg\min}\,  {\mathcal R}_{\mathcal L}(\hat{\bfS}^{\star}(\rho),\bfS).
\end{equation}
\subsection{Hilbert-Schmidt loss}
A common choice is the Hilbert-Schmidt  (HS) loss 
where, with ${\cal L}=HS$
\begin{eqnarray}
{\mathcal R}_{HS}(\hat{\bfS}^{\star}(\rho),\bfS) &{\displaystyle\DefEq} & E\{\tr\{(\hat{\bfS}^{\star}(\rho)-\bfS)^2\}\}\nonumber\\
&=& E\{ || {\bfS}^{\star}(\rho) -\bfS ||^2_{\rm F}\},\label{eq:riskHS}
\end{eqnarray}
where, for $\bfA\in{\mathbb C}^{p\times p}$, $|| \bfA ||_{\rm F}$ denotes the Frobenius norm
$
||\bfA||_{\rm F}=[\tr\{\bfA \bfA^H\}]^{1/2}
,$ $\tr\{\cdot\}$ denotes trace,
and $^H$ denotes complex-conjugate (Hermitian) transpose.

Using (\ref{eq:basicShrinkage}) and (\ref{eq:riskHS}) the target  matrix was chosen to be  of the form  $\bfT=(\tr\{ \bfS \}/p)\bfI_p$ in  \cite{LedoitWolf04} (for  real-valued covariance matrices) so that
\begin{equation}
{\bfS}^{\star}(\rho)=(1-\rho) {\hat{\bfS}}+\rho\frac{\tr\{\bfS\}}{p}\bfI_p.
\label{eq:eqnSHRff}
\end{equation}
For the ill-conditioned case ($p < K, p \simeq K$)  the estimator diagonally loads  the initial matrix $\hat{\bfS}$ and increases the zero or near-zero eigenvalues. 
For this choice  the solution to (\ref{eq:basicShrinkage}) and (\ref{eq:riskHS}) is 
(e.g.
\cite[eq.~9]{WaldenSLuftman15}), 
\begin{equation}\label{eq:HSrhoa}
\rho_0
=\left[1-\frac{K}{p}+
K\frac{\tr\{\bfS^2\}}{\tr^2\{\bfS\}}\right]^{-1}.
\end{equation}

\subsection{Quadratic Loss}

One alternative choice of loss criterion is the quadratic loss function:
\begin{equation}
{\mathcal R}_{QL}(\hat{\bfS}^\star(\rho), \bfS) = E\{\tr\{({\hat\bfS}^\star(\rho)\bfS^{-1} - \bfI_p)^2\}\}.\label{eq:riskQL}
\end{equation}
\begin{lemma}\label{lemma:QL2}
With ${\bfS}^{\star}(\rho)$ defined as in (\ref{eq:eqnSHRff})   the form of $\rho_0$ solving (\ref{eq:basicShrinkage}) and (\ref{eq:riskQL}) is
\begin{equation}\label{eq:rhoQLDet}
\frac{p^2}{(Kp+p^2)-\frac{2}{p^2}\left[Kp\,\tr\{\bfS^{-1}\}\tr\{\bfS\}-\frac{K}{2} \tr^2\{\bfS\}\tr\{\bfS^{-2}\}\right]}.
\end{equation}
\end{lemma}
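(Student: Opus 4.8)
The plan is to substitute the target form (\ref{eq:eqnSHRff}) into the QL risk (\ref{eq:riskQL}) and to split the estimation error into a mean-zero random part and a deterministic part. Writing $c\DefEq\tr\{\bfS\}/p$ for the scalar target weight, a short rearrangement gives $\hbS^{\star}(\rho)\bfS^{-1}-\bfI_p=(1-\rho)\bfU+\rho\bfV$, where $\bfU\DefEq\hbS\bfS^{-1}-\bfI_p$ is random with $E\{\bfU\}=\bfS\bfS^{-1}-\bfI_p=\bfO$ by (\ref{eq:eqnSHRll}), and $\bfV\DefEq c\,\bfS^{-1}-\bfI_p$ is deterministic and Hermitian. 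Squaring, taking the trace and then the expectation, the two cross terms sum to $2\rho(1-\rho)E\{\tr\{\bfU\bfV\}\}=2\rho(1-\rho)\tr\{E\{\bfU\}\bfV\}$, which vanishes because $E\{\bfU\}=\bfO$ and $\bfV$ is deterministic. Hence the risk collapses to the quadratic form $\mathcal{R}_{QL}(\hbS^{\star}(\rho),\bfS)=(1-\rho)^2a+\rho^2b$ with $a\DefEq E\{\tr\{\bfU^2\}\}$ and $b\DefEq\tr\{\bfV^2\}$.

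Minimising this elementary quadratic over $\rho$ gives $\rho_0=a/(a+b)$, which automatically lies in $(0,1)$ once $a>0$ is established below, since $b=\tr\{\bfV^2\}=\tr\{\bfV\bfV^H\}\ge 0$ as the squared Frobenius norm of a Hermitian matrix. The deterministic coefficient is immediate: $b=\tr\{(c\,\bfS^{-1}-\bfI_p)^2\}=c^2\tr\{\bfS^{-2}\}-2c\,\tr\{\bfS^{-1}\}+p$, into which I would substitute $c=\tr\{\bfS\}/p$. For the random coefficient, expanding $\bfU^2=(\hbS\bfS^{-1})^2-2\hbS\bfS^{-1}+\bfI_p$ and using $E\{\tr\{\hbS\bfS^{-1}\}\}=\tr\{\bfS\bfS^{-1}\}=p$ from (\ref{eq:eqnSHRll}) reduces $a$ to $a=E\{\tr\{(\hbS\bfS^{-1})^2\}\}-p$.

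The one genuinely non-trivial step --- and the main obstacle --- is the fourth-order Wishart moment $E\{\tr\{(\hbS\bfS^{-1})^2\}\}$. I would handle it by whitening: with $\bW\DefEq K\hbS\EqualDist{\cal W}_p^C\{K,\bfS\}$ from (\ref{eq:eqnWish}), the matrix $\bfM\DefEq\bfS^{-1/2}\bW\bfS^{-1/2}\EqualDist{\cal W}_p^C\{K,\bfI_p\}$ is a standard complex Wishart, and cyclic invariance of the trace gives $\tr\{(\bW\bfS^{-1})^2\}=\tr\{\bfM^2\}$. The standard identity $E\{\tr\{\bfM\bfA\bfM\bfB\}\}=K^2\tr\{\bfA\bfB\}+K\tr\{\bfA\}\tr\{\bfB\}$ (which itself follows from the Wick/Isserlis rule applied to the underlying complex Gaussian columns) with $\bfA=\bfB=\bfI_p$ yields $E\{\tr\{\bfM^2\}\}=K^2p+Kp^2$, whence $E\{\tr\{(\hbS\bfS^{-1})^2\}\}=p(K+p)/K$ and $a=p^2/K>0$. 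Finally I would substitute $a$ and $b$ into $\rho_0=a/(a+b)$, clear the factor $1/K$ by multiplying numerator and denominator by $K$ to obtain $\rho_0=p^2/(p^2+Kb)$, and then expand $Kb$ and collect terms to match the displayed expression (\ref{eq:rhoQLDet}); this last rearrangement is purely algebraic and I would expect it to reproduce the $Kp+p^2$ and the two trace-product terms exactly.
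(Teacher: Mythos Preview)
Your argument is correct. Both routes hinge on the same Wishart second moment --- the paper's (\ref{eq:firstmom}) with $\bfA=\bfB=\bfS^{-1}$, your whitened $E\{\tr\{\bfM^2\}\}=K^2p+Kp^2$ --- and both arrive at $E\{\tr\{(\hbS\bfS^{-1})^2\}\}=p+p^2/K$, i.e.\ the paper's (\ref{eq:partone}). The organisation, however, is genuinely different. The paper differentiates the risk directly via the trace-derivative identity (\ref{eq:diffres}), expands $\frac{\partial\bfF}{\partial\rho}\bfF$ term by term, takes expectations, and then must separately verify that the stationary point is a minimum by completing the square in $\tr\{\bfS\}$ and invoking Chebyshev's inequality. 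Your bias--variance split $\hbS^{\star}\bfS^{-1}-\bfI_p=(1-\rho)\bfU+\rho\bfV$ with $E\{\bfU\}=\bfO$ kills the cross term and reduces the risk to the transparent quadratic $(1-\rho)^2a+\rho^2b$; the minimiser $\rho_0=a/(a+b)$ and its membership in $(0,1)$ then fall out without any second-derivative analysis, since $a=p^2/K>0$ and $b=\|\bfV\|_{\rm F}^2\ge 0$ make convexity automatic. Your approach is shorter and more conceptual; the paper's is more mechanical but reuses the differentiation machinery set up in Appendix~\ref{app:diff} for the other lemmas.
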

\begin{proof}
This is given in Appendix\ref{app:rhoQLDet}.
\end{proof}
For easy identification we shall call this the QLa method.
\begin{figure}[t]
\centering
\includegraphics{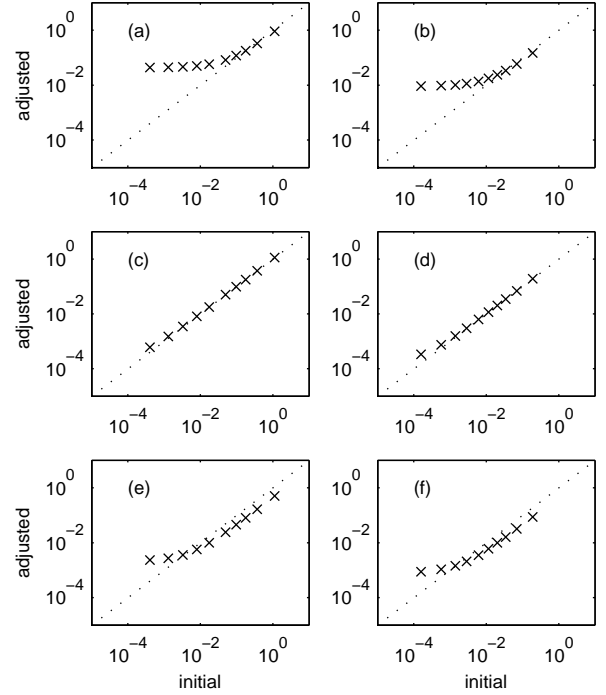}
\caption{
Average eigenvalue adjustment
for frequencies $f=0.85$Hz (left column) and 
$3.85$Hz (right column). The first row is for the
HS method,  the second for QLa and the third for QLb. 
 $K=12$ here.}
\label{fig:eigenvalues}
\end{figure}
\begin{figure}[t]
\centering
\includegraphics{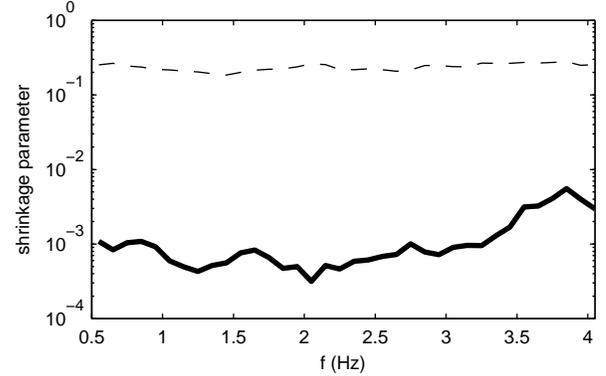}
\caption{Oracle value of $\rho(f)$ for HS (dashed line) and QLa (thick line)
for frequencies 0.55 to 4.05 in steps of 0.1.
}
\label{fig:shrinkparam}
\end{figure}

\subsection{Generalization}
\label{sec:QLshrinkage}
The shrinkage concept  can be extended to shrinkage estimators with generalised scaled identity targets, 
\begin{equation}\label{eq:genshrink}
{\hat\bfS}^{\star}(\eta,\rho) = (1-\rho){\hat\bfS}+\rho\, \eta\, \bfI_p ,\quad \eta >0,\rho \in (0,1) 
\end{equation}
with the optimum  shrinkage $(\rho_0)$ and scaling   parameter values $(\eta_0)$ given by
\begin{equation}
(\eta_0,\rho_0 )=  \underset{\eta>0 ,\,\rho \in (0,1) }{\arg\min} {\mathcal R}_{\mathcal{L}}(\hat{\bfS}^{\star}(\eta,\rho),\bfS),
\label{eq:LWestimator}
\end{equation}
for any convex loss criterion ${\mathcal L}.$
\begin{lemma}\label{lemma:twopar}
The solutions of  (\ref{eq:LWestimator}) for both the HS and QL losses are
\begin{eqnarray}
(\eta_0,\rho_0)_{\rm HS}=\left( \frac{\tr\{\bfS\}}{p}, \left[1-\frac{K}{p}+
K\frac{\tr\{\bfS^2\}}{\tr^2\{\bfS\}}\right]^{-1}\right),\label{eq:theorHS}\\
(\eta_0,\rho_0)_{\rm QL}=\left(\frac{\tr\{\bfS^{-1}\}}{\tr\{\bfS^{-2}\}},\left[ 1+ \frac{K}{p}- 
\frac{K\tr^2\{\bfS^{-1}\}}{p^2\tr\{\bfS^{-2}\}}\right]^{-1}\right).\label{eq:theorQL}
\end{eqnarray}
\end{lemma}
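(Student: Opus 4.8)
The plan is to handle the HS and QL cases in a unified way, exploiting that (\ref{eq:genshrink}) is affine in the parameters and that both risks are expected quadratic forms. First I would substitute ${\hat\bfS}^{\star}(\eta,\rho)=(1-\rho){\hat\bfS}+\rho\eta\bfI_p$ into the relevant risk, (\ref{eq:riskHS}) or (\ref{eq:riskQL}), and expand the trace of the squared matrix into six terms: a $(1-\rho)^2$ term, a $\rho^2\eta^2$ term, a constant, a bilinear $(1-\rho)\rho\eta$ cross term, and two further cross terms proportional to $(1-\rho)$ and to $\rho\eta$. For HS the quadratic-in-${\hat\bfS}$ part is $\tr\{{\hat\bfS}^2\}$ and the linear parts involve $\tr\{{\hat\bfS}\}$ and $\tr\{{\hat\bfS}\bfS\}$; for QL the same expansion with $\bfS^{-1}$ inserted gives $\tr\{({\hat\bfS}\bfS^{-1})^2\}$ together with $\tr\{{\hat\bfS}\bfS^{-1}\}$ and $\tr\{{\hat\bfS}\bfS^{-2}\}$.

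The first-order expectations follow immediately from (\ref{eq:eqnSHRll}): $E\{\tr\{{\hat\bfS}\}\}=\tr\{\bfS\}$, $E\{\tr\{{\hat\bfS}\bfS\}\}=\tr\{\bfS^2\}$, $E\{\tr\{{\hat\bfS}\bfS^{-1}\}\}=p$ and $E\{\tr\{{\hat\bfS}\bfS^{-2}\}\}=\tr\{\bfS^{-1}\}$. The step I expect to be the main obstacle is evaluating the two second-order expectations $E\{\tr\{{\hat\bfS}^2\}\}$ and $E\{\tr\{({\hat\bfS}\bfS^{-1})^2\}\}$. For these I would use the complex Wishart structure (\ref{eq:eqnWish}), writing $K{\hat\bfS}=\sum_k\bfu_k\bfu_k^H$ with the $\bfu_k\sim{\cal N}_p^C\{{\bf 0},\bfS\}$ independent, so that $\tr\{(K{\hat\bfS})^2\}=\sum_{k,l}|\bfu_k^H\bfu_l|^2$. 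Taking expectations via the complex-Gaussian identity $E\{(\bfu^H\bfA\bfu)(\bfu^H\bfB\bfu)\}=\tr\{\bfA\bfS\}\tr\{\bfB\bfS\}+\tr\{\bfA\bfS\bfB\bfS\}$ on the diagonal $k=l$ terms and using independence on the $k\neq l$ terms yields $E\{\tr\{{\hat\bfS}^2\}\}=\tr\{\bfS^2\}+\tr^2\{\bfS\}/K$. For the QL moment I would first use the similarity ${\hat\bfS}\bfS^{-1}\sim\bfS^{-1/2}{\hat\bfS}\bfS^{-1/2}$, whose $K$-fold scaling is distributed as ${\cal W}_p^C\{K,\bfI_p\}$, reducing the computation to the identity-scale case and giving $E\{\tr\{({\hat\bfS}\bfS^{-1})^2\}\}=p+p^2/K$.

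With the risks now explicit scalar functions $g(\eta,\rho)$, I would minimise sequentially. Setting $\partial g/\partial\eta=0$ and dividing by $\rho$, the $\rho$-dependence cancels and the condition becomes linear in $\eta$, giving $\eta_0=\tr\{\bfS\}/p$ (HS) and $\eta_0=\tr\{\bfS^{-1}\}/\tr\{\bfS^{-2}\}$ (QL), each independent of $\rho$ --- the feature that makes the two-stage optimisation clean. Substituting $\eta_0$ back, the bilinear and two linear cross terms collapse so that every $\eta_0$-dependent contribution combines into a single $-\rho^2$ multiple of a constant, leaving a scalar quadratic in $\rho$. Solving $\partial g/\partial\rho=0$ and then multiplying numerator and denominator by $K/\tr^2\{\bfS\}$ (HS) or $K/\tr^2\{\bfS^{-1}\}$ (QL) delivers the claimed $\rho_0$ in (\ref{eq:theorHS}) and (\ref{eq:theorQL}); the HS value coincides with (\ref{eq:HSrhoa}), confirming that the fixed LW target is in fact jointly optimal.

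Finally I would verify global optimality and feasibility. Reparametrising by $a=1-\rho$ and $b=\rho\eta$ makes ${\hat\bfS}^{\star}=a{\hat\bfS}+b\bfI_p$ linear in $(a,b)$, so each risk is a convex quadratic with positive-definite Hessian (its determinant being positive by Cauchy--Schwarz), and the stationary point is the unique global minimum. Positivity $\eta_0>0$ is clear since $\bfS$ is positive definite, and $\rho_0\in(0,1)$ follows from the Cauchy--Schwarz inequalities $\tr\{\bfS^2\}\geq\tr^2\{\bfS\}/p$ and $\tr\{\bfS^{-2}\}\geq\tr^2\{\bfS^{-1}\}/p$, which force the denominators in (\ref{eq:theorHS}) and (\ref{eq:theorQL}) to lie in $[1,\infty)$, with equality only when $\bfS$ is a scaled identity.
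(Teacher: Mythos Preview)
Your proposal is correct and follows essentially the same route as the paper: optimise over $\eta$ first (finding it independent of $\rho$), then over $\rho$, using the complex-Wishart second moments, and confirm the minimum via the $(\alpha,\beta)$ reparametrisation that the paper carries out in Lemma~\ref{lemma:reparam}. Your self-contained derivations of $E\{\tr\{{\hat\bfS}^2\}\}$ and $E\{\tr\{({\hat\bfS}\bfS^{-1})^2\}\}$ (the paper simply cites (\ref{eq:firstmom})--(\ref{eq:secondmom})) and the explicit feasibility check $\rho_0\in(0,1]$ are welcome additions; note only that in the final QL simplification the correct multiplier is $K/p^2$, not $K/\tr^2\{\bfS^{-1}\}$.
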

\begin{proof}
This is given in Appendix \ref{app:twoparamorig}.
\end{proof}
\begin{remark}
We see that the  choice of target 
$\bfT=(\tr\{ \bfS \}/p)\bfI_p$ is identical to $\eta_0$ under the HS loss, i.e., the scaling factor 
$(\tr\{ \bfS \}/p)$ 
is an optimal choice under the HS loss for shrinkage model (\ref{eq:genshrink}). Since the targets are then the same the form of $\rho_0$ was already known from (\ref{eq:HSrhoa}). 
\end{remark}

We shall call the method defined by
(\ref{eq:theorHS}) the HS method. We also call the method defined by (\ref{eq:theorQL}) the QLb method.

In deriving the optimal forms of $\eta_0$ and $\rho_0$
(Appendix\ref{app:twoparamorig}), we note that the form of $\eta_0$ for both losses arises independently of $\rho_0$. This leads to the following result.
\begin{lemma}\label{lemma:reparam}
If we reparameterize the shrinkage model in
(\ref{eq:genshrink}) to
$
{\hat\bfS}^{\star}(\alpha,\beta) = \alpha\,{\hat\bfS}+\beta\, \bfI_p ,\quad \alpha,\beta >0,
$
it follows that $(\alpha_0,\beta_0)$ defined by
$
(\alpha_0,\beta_0 )=  \underset{\alpha, \beta>0 }{\arg\min} {\mathcal R}_{\mathcal{L}}(\hat{\bfS}^{\star}(\alpha,\beta),\bfS),
$
are given by
$
(\alpha_0,\beta_0)_{\text{HS or QL}}=((1-\rho_0),\rho_0\eta_0)_{\text{HS or QL}}.
$
\end{lemma}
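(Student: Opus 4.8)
The plan is to treat the reparameterization as a smooth change of variables and to transport the optimum of Lemma~\ref{lemma:twopar} through it. First I would introduce the map $T:(\eta,\rho)\mapsto(\alpha,\beta)=(1-\rho,\,\rho\eta)$, a bijection from the parameter region $\{\eta>0,\ \rho\in(0,1)\}$ of model (\ref{eq:genshrink}) onto the strip $\{\alpha\in(0,1),\ \beta>0\}$, with smooth inverse $(\alpha,\beta)\mapsto\bigl(\beta/(1-\alpha),\,1-\alpha\bigr)$. The crucial observation is that the two models return the \emph{same} matrix under $T$, since $\alpha\hbS+\beta\bfI_p=(1-\rho)\hbS+\rho\eta\,\bfI_p$ whenever $(\alpha,\beta)=T(\eta,\rho)$. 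Because the risk ${\mathcal R}_{\mathcal L}(\cdot,\bfS)$ sees the parameters only through this matrix, the two risk surfaces coincide under $T$; hence minimizing over $(\eta,\rho)$ is \emph{identical} to minimizing over $(\alpha,\beta)$ in the strip, and the strip-minimizer is the image $T(\eta_0,\rho_0)=(1-\rho_0,\,\rho_0\eta_0)$ of the minimizer of Lemma~\ref{lemma:twopar}. (Equivalently, as $\det J=\rho\neq0$ at the optimum, the chain rule $\nabla_{(\eta,\rho)}{\mathcal R}=J^{T}\nabla_{(\alpha,\beta)}{\mathcal R}$ transfers stationarity directly; this is just the remark preceding the lemma that $\eta_0$ arises independently of $\rho_0$.)

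The genuinely substantive point --- and the main obstacle --- is that the new model ranges over the \emph{full} quadrant $\{\alpha>0,\ \beta>0\}$, which strictly contains the strip, so I must exclude a better minimizer with $\alpha\ge1$. I would close this gap with convexity plus a feasibility check. For convexity, each risk is a quadratic in $(\alpha,\beta)$ with positive-definite Hessian: for HS the Hessian is $2\bigl(\begin{smallmatrix}E\{\tr\{\hbS^2\}\}&\tr\{\bfS\}\\ \tr\{\bfS\}&p\end{smallmatrix}\bigr)$, whose determinant equals $\var\{\tr\{\hbS\}\}>0$ on account of the eigenvalue bound $\tr^2\{\hbS\}\le p\,\tr\{\hbS^2\}$ and $E\{\tr\{\hbS\}\}=\tr\{\bfS\}$; for QL, writing $\bfM=\bfS^{-1/2}\hbS\bfS^{-1/2}$ the Hessian becomes the Gram matrix $2\bigl(\begin{smallmatrix}E\{\tr\{\bfM^2\}\}&E\{\tr\{\bfM\bfS^{-1}\}\}\\ E\{\tr\{\bfM\bfS^{-1}\}\}&\tr\{\bfS^{-2}\}\end{smallmatrix}\bigr)$ of the Hermitian pair $(\bfM,\bfS^{-1})$, again positive definite by Cauchy--Schwarz. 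For feasibility, the explicit denominators in (\ref{eq:theorHS})--(\ref{eq:theorQL}) give $\rho_0\in(0,1)$, because $p\,\tr\{\bfS^2\}\ge\tr^2\{\bfS\}$ and $p\,\tr\{\bfS^{-2}\}\ge\tr^2\{\bfS^{-1}\}$ (eigenvalue Cauchy--Schwarz, strict unless $\bfS$ is a scalar multiple of $\bfI_p$); hence $\alpha_0=1-\rho_0\in(0,1)$ and $\beta_0=\rho_0\eta_0>0$ lie in the interior of the quadrant.

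Combining the two, $(\alpha_0,\beta_0)$ is an interior stationary point of a strictly convex function and is therefore its unique global minimizer over the whole quadrant, which is precisely the claim $(\alpha_0,\beta_0)_{\text{HS or QL}}=\bigl((1-\rho_0),\,\rho_0\eta_0\bigr)_{\text{HS or QL}}$. Everything apart from the convexity-and-feasibility step is a transparent relabelling of coordinates; that step is where the real work sits, since it is exactly what guarantees the relaxed optimization over $\alpha,\beta>0$ does not escape the original strip.
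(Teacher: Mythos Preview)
Your argument is correct, but it proceeds differently from the paper's own proof. The paper works entirely in the $(\alpha,\beta)$ coordinates: it recomputes the first-order conditions from scratch, solves the resulting $2\times 2$ linear system using the Wishart moment identities (\ref{eq:firstmom})--(\ref{eq:secondmom}), and then \emph{identifies} the explicit solutions with $1-\rho_0$ and $\rho_0\eta_0$. Only afterwards does it check the Hessian. By contrast, you never re-derive the first-order conditions; you transport the stationary point of Lemma~\ref{lemma:twopar} through the diffeomorphism $T$ and let strict convexity in $(\alpha,\beta)$ do the rest. Your route is more conceptual and avoids repeating calculations already done for Lemma~\ref{lemma:twopar}; the paper's route is more self-contained and --- as remarked after (\ref{eq:proofden}) --- serves the secondary purpose of supplying the second-order verification that the proof of Lemma~\ref{lemma:twopar} itself deferred. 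The Hessian/positivity step is essentially shared: both the paper and you rely on the eigenvalue Cauchy--Schwarz (Chebyshev) inequality $p\,\tr\{\bfA^2\}\ge\tr^2\{\bfA\}$.

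One small imprecision: in your HS convexity check you say the Hessian determinant ``equals'' $\var\{\tr\{\hbS\}\}$. It does not; rather, the eigenvalue bound gives $pE\{\tr\{\hbS^2\}\}\ge E\{\tr^2\{\hbS\}\}$, so the determinant is \emph{bounded below} by $E\{\tr^2\{\hbS\}\}-\tr^2\{\bfS\}=\var\{\tr\{\hbS\}\}>0$. The conclusion is unaffected.
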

\begin{proof}
See Appendix \ref{app:twoparamnew}.
\end{proof}

\section{Example Oracle Behaviour}\label{sec:egoracle}
The above oracle solutions require exact knowledge of the true spectral matrix. Before looking at estimation methods, we  examine some aspects of the oracle solutions.

\subsection{Transformation of Eigenvalues}
Minimizing  the HS and QL risks effectively transforms the raw eigenvalues of ${\hat\bfS}$. Fig.~\ref{fig:eigenvalues} shows the transformation of the $p=10$ eigenvalues for HS, QLa and QLb at frequencies 0.8Hz and 3.85Hz. Both the initial and adjusted eigenvalues shown are averages, following sorting, over the $M=500$ simulations of ${\hat\bfS}.$ The main feature, which is persistent at other frequencies and data sets, is that the HS method leads to a large increase in the small eigenvalues and a very small decrease in the larger eigenvalues. The QLa method increases only the smallest eigenvalues by a small amount, and barely changes the large ones. The QLb method increases the small eigenvalues more than QLa but much less so than HS and decreaseses the larger eigenvalues by a larger amount than HS.
Such behaviour will have a significant effect in the estimation of partial coherencies.

\subsection{Shrinkage Parameters}
For methods HS and QLa the shrinkage parameter $\rho$ may be compared directly as the model  is the same in both cases. Fig.~\ref{fig:shrinkparam} shows the the oracle shrinkage parameters. From Fig.~\ref{fig:eigenvalues}(a),(b) and (c),(d) we are not surprised to see that the shrinkage parameter is orders of magnitude larger for HS than for QLa. For QLa it varies between around $10^{-3}$ to $10^{-2}$ while for HS it is around 0.25.

\subsection{Partial coherence}\label{subsec:pcohsim}
The 45 partial coherencies at each frequency can be quite variable in size from near zero to near unity.  Let ${\hat\gamma}^2_{jk\cond\{\setminus jk\}}(m;f_l)$ denote the estimate -- by any method -- of the partial coherence at frequency $f_l$ and replication number $m$ where $m=1,\ldots,M.$ To measure the quality of the estimates of partial coherencies  we firstly calculated the sum of squared errors
over all pairs $(j,k), 1\leq j<k\leq p:$
$$
{\cal E}(m; f_l)=\sum_{j,k} \left[{\hat\gamma}^2_{jk\cond\{\setminus jk\}}(m;f_l)-{\gamma}^2_{jk\cond\{\setminus jk\}}(f_l)\right]^2.
$$
We then averaged these over replications to get
$ {\bar {\cal E}}(f_l)=(1/M)\sum_{m=1}^M {\cal E}(m; f_l).$
Finally a percentage relative improvement in squared error (PRISE) was computed as
$$
{\rm PRISE}(f_l)=100 \left[\frac{{\bar {\cal E}}_B(f_l)-{\bar {\cal E}}_M(f_l)}{{\bar {\cal E}}_B(f_l)}\right]\%,
$$
where ${\bar {\cal E}}_B(f)$ denotes the average summed squared errors of the raw or basic estimate, and ${\bar {\cal E}}_M(f)$ denotes the same for any of the HS, QLa or QLb shrinkage methods.

\begin{figure}[t]
\centering
$
\begin{matrix}
\includegraphics{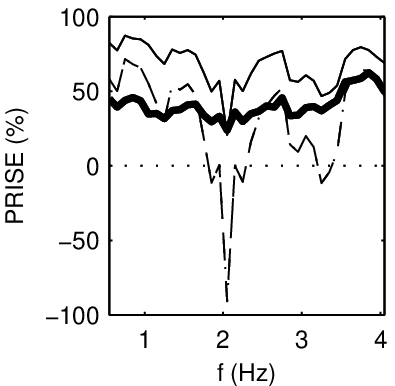}
\includegraphics{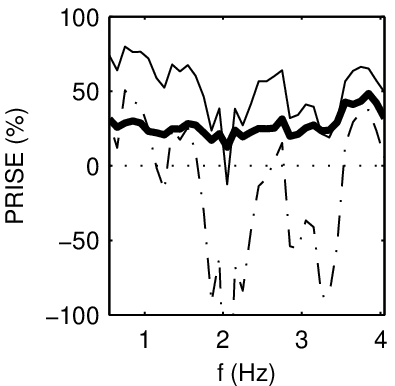}\\
\includegraphics{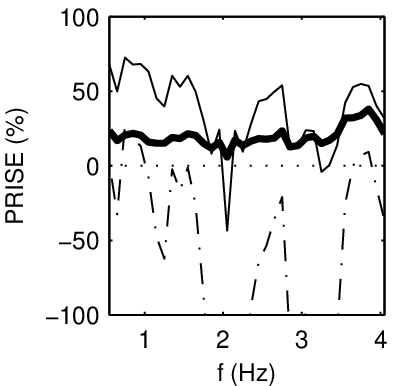}
\includegraphics{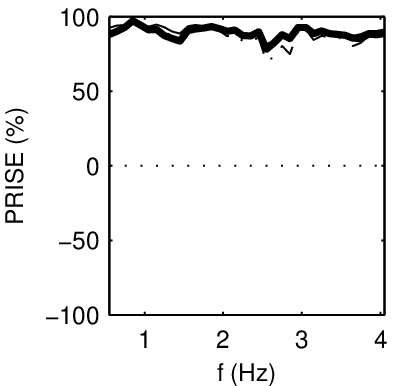}
\end{matrix}
$
\caption{Oracle PRISE with partial coherencies
derived from spectral matrices.  HS (dash-dot line), QLa (thick line) and QLb (thin line)
for $K=12$ (top left), $K=14$ (top right),
$K=16$ (bottom left), all for individual one, and
$K=12$ (bottom right) for individual two.
}
\label{fig:PRISE_SPEC}
\end{figure}
We are thus able to determine the improvement  in using any of the three shrinkage methods over using just the raw estimates. Results are shown in Fig.~\ref{fig:PRISE_SPEC} with results at frequencies  $0.55$ to $4.05$ in steps of 0.1. These are oracle results: the known quantities $\tr\{\bfS_0(f)\}, \tr\{\bfS^2_0(f)\},
\tr\{\bfS^{-1}_0(f)\}, \tr\{\bfS^{-2}_0(f)\}$  have been used
in the various estimators.

We also averaged the PRISEs over the frequencies to obtain Table~\ref{tab:avPRISE}.
Looking at the top three rows for individual one, we see that
the HS method does very poorly, being worse than using the raw estimates. QLb generally does well but is very variable (Fig.~\ref{fig:PRISE_SPEC}) and rather unpredictable with frequency. QLa seems to behave very nicely giving a fairly frequency-constant improvement  over using the raw estimates. In the sparse non-null partial coherence case of patient two, all methods do well.
\begin{table}[t]
\begin{center}
\begin{tabular}{|c|c|c|c|c|}
\hline 
\multicolumn{1}{|c|}{method} & \multicolumn{3}{|c|}{individual one} &
\multicolumn{1}{|c|}{individual two}
\tabularnewline
 & $K=12$ & $K=14$ & $K=16$ & $K=12$\tabularnewline
\hline \hline 
HS & 31 & -17 & -66 & 87 \tabularnewline
\hline 
QLa & 41 & 27  & 20 & 89 \tabularnewline
\hline 
QLb & 68 & 51  & 37 & 91 \tabularnewline
\hline\hline 
HSP & 27 & 7  & -7 & 82 \tabularnewline
\hline 
QLP & 52 & 32  & 23 & 85 \tabularnewline
\hline\hline
QLa-est & 26 & 18  & 13 & 66 \tabularnewline
\hline 
QLb-est & 65 & 49  & 34 & 87 \tabularnewline
\hline 
QLP-est & 52 & 34  & 25 & 84 \tabularnewline
\hline 
\end{tabular}
\end{center}
\caption{Average PRISE($\%$) over frequencies. }
\label{tab:avPRISE}
\end{table}
\section{Shrinking the Precision Matrix}\label{sec:shrinkP}
Since the partial coherence is derived from the precision matrix $\bfC(f)=\bfS^{-1}(f)$ we can also consider  shrinkage for this matrix. A possible approach is to take
\begin{equation}\label{eq:genshrinkc}
{\hat\bfC}^{\star}(\alpha,\beta) = \alpha\,{\hat\bfS}^{-1}+\beta\, \bfI_p ,\quad \alpha,\beta >0.
\end{equation}
Such a model appears in \cite{EfronMorris76} and \cite{Haff79} with the slight modification that the right side takes the form 
\begin{equation}\label{eq:Haff}
\alpha\,{\hat\bfS}^{-1}+ {{\beta}/{\tr\{ {\hat\bfS}\} }}\bfI_p.
\end{equation}
\begin{lemma}\label{lemma:precisionHS}
Under the model (\ref{eq:genshrinkc}), the HS risk
$
{\mathcal R}_{HS}(\hat{\bfC}^\star(\alpha,\beta), \bfC) = E\{\tr\{({\hat\bfC}^\star(\alpha,\beta)-\bfS^{-1} )^2\}\}
$
is minimized, for $K>p+1,$ by
\begin{eqnarray}
\alpha_0 &=&\frac{1}{D}\left[
p\frac
{ \tr\{ \bfS^{-2} \} }{\tr^2\{\bfS^{-1}\} } -1
\right]\label{eq:PalpHS}\\
\beta_0 &=& \frac{c_3 \,\tr\{\bfS^{-1}\} }{D}\left[ \frac
{ \tr\{ \bfS^{-2} \} }{\tr^2\{\bfS^{-1}\} }+(K-p)\right],\nonumber
\end{eqnarray}
where 
\begin{eqnarray*}
c_3 &=& \frac{K }{(K-p)^3-(K-p)}\\
D&=& \left[c_3 p (K-p)^2\frac
{ \tr\{ \bfS^{-2} \} }{\tr^2\{\bfS^{-1}\} }  +c_3 p(K-p)-\frac{K}{(K-p)} \right] .
\end{eqnarray*}
\end{lemma}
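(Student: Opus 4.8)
The plan is to expand the Hilbert--Schmidt risk into an explicit quadratic form in the two scalar parameters $\alpha$ and $\beta$, to evaluate the resulting matrix moments using the complex Wishart law (\ref{eq:eqnWish}), and then to minimize the quadratic by solving the associated $2\times2$ normal equations. First I would expand the squared Hermitian matrix inside the trace. Since $\hbS^{-1}$, $\bfI_p$ and $\bfS^{-1}$ are all Hermitian, cyclicity of the trace gives
\[
\tr\{(\alpha\hbS^{-1}+\beta\bfI_p-\bfS^{-1})^2\}=\alpha^2\tr\{\hbS^{-2}\}+\beta^2 p+\tr\{\bfS^{-2}\}+2\alpha\beta\,\tr\{\hbS^{-1}\}-2\alpha\,\tr\{\hbS^{-1}\bfS^{-1}\}-2\beta\,\tr\{\bfS^{-1}\}.
\]
Taking expectations, the risk depends on only three random quantities, $a_1=E\{\tr\{\hbS^{-2}\}\}$, $a_2=E\{\tr\{\hbS^{-1}\}\}$ and $a_3=E\{\tr\{\hbS^{-1}\bfS^{-1}\}\}$; the remaining terms are deterministic.

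Second, I would evaluate these moments. Since $K\hbS\sim{\cal W}_p^C\{K,\bfS\}$, the first inverse moment gives $E\{\hbS^{-1}\}=\frac{K}{K-p}\bfS^{-1}$ (valid for $K>p$), whence $a_2=\frac{K}{K-p}\tr\{\bfS^{-1}\}$ and, by linearity, $a_3=\frac{K}{K-p}\tr\{\bfS^{-2}\}$. The crucial ingredient is the second inverse moment: the complex inverse-Wishart formula yields
\[
a_1=E\{\tr\{\hbS^{-2}\}\}=\frac{K^2\big[(K-p)\tr\{\bfS^{-2}\}+\tr^2\{\bfS^{-1}\}\big]}{(K-p)[(K-p)^2-1]}=Kc_3\big[(K-p)\tr\{\bfS^{-2}\}+\tr^2\{\bfS^{-1}\}\big],
\]
with $c_3$ as defined in the statement. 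I expect this to be the main obstacle, since it requires the fourth-order (equivalently second-order inverse) complex Wishart moment; moreover the factor $(K-p)[(K-p)^2-1]$ appearing here is precisely what forces the hypothesis $K>p+1$, so that the moment, and hence the risk, is finite.

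Third, with these three moments substituted the risk is an explicit quadratic in $(\alpha,\beta)$. Setting $\partial/\partial\alpha=\partial/\partial\beta=0$ produces the linear system
\[
\begin{pmatrix} a_1 & a_2 \\ a_2 & p \end{pmatrix}\begin{pmatrix}\alpha\\\beta\end{pmatrix}=\begin{pmatrix}a_3\\\tr\{\bfS^{-1}\}\end{pmatrix}.
\]
I would solve this by Cramer's rule, giving $\alpha_0=(p\,a_3-a_2\tr\{\bfS^{-1}\})/\Delta$ and $\beta_0=(a_1\tr\{\bfS^{-1}\}-a_2a_3)/\Delta$ with $\Delta=p\,a_1-a_2^2$, and then reduce to the stated closed forms by factoring $\tr^2\{\bfS^{-1}\}$ out of each numerator and identifying $D=\Delta(K-p)/(K\tr^2\{\bfS^{-1}\})$; the three terms of $D$ then emerge exactly from the three terms of $\Delta$. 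Finally, the coefficient matrix above equals $\bigl(\begin{smallmatrix} E\|\hbS^{-1}\|_{\rm F}^2 & E\langle\hbS^{-1},\bfI_p\rangle_{\rm F}\\ E\langle\hbS^{-1},\bfI_p\rangle_{\rm F} & \|\bfI_p\|_{\rm F}^2\end{smallmatrix}\bigr)$, so the quadratic part of the risk is $E\{\|\alpha\hbS^{-1}+\beta\bfI_p\|_{\rm F}^2\}\ge0$, vanishing only when $\alpha=\beta=0$; this confirms the Hessian is positive definite and the stationary point is the global minimum.
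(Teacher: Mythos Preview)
Your proposal is correct and follows essentially the same route as the paper: expand the HS risk as a quadratic in $(\alpha,\beta)$, evaluate the required inverse-Wishart moments (the paper packages these as equations (\ref{eq:fifthmom}) and (\ref{eq:thirdmom}), with $c_1=Kc_3$), set the two partial derivatives to zero, and solve the resulting $2\times2$ linear system. The only notable difference is in the second-order check: the paper computes the Hessian determinant explicitly and invokes the inequality $p\,\tr\{\bfS^{-2}\}\ge\tr^2\{\bfS^{-1}\}$ to show it is positive, whereas your Gram-matrix observation that the quadratic part equals $E\{\|\alpha\hbS^{-1}+\beta\bfI_p\|_{\rm F}^2\}$ gives positive definiteness more directly.
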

\begin{proof}
See Appendix \ref{app:PrecisHS}. 
\end{proof}
We shall call this the HSP method.

\begin{lemma}\label{lemma:precisionQL}
Under the model (\ref{eq:genshrinkc}), the QL risk
$
{\mathcal R}_{QL}(\hat{\bfC}^\star(\alpha,\beta), \bfC) = E\{\tr\{({\hat\bfC}^\star(\alpha,\beta)\bfS - \bfI_p)^2\}\}
$
is minimized, for $K>p+1,$ by
\begin{equation}\label{eq:PalpQL}
\alpha_0=\frac{1}{D}\left[
p\frac
{ \tr\{ \bfS^2 \} }{\tr^2\{\bfS\} } -1
\right]; \,\,
\beta_0=\frac{p}{D\,\tr\{ \bfS \}}{ \left[c_0-  \frac{K}{K-p}\right] }
\end{equation}
where 
\begin{equation}\label{eq:defC0}
c_0=\frac{K^2 }{(K-p)^2-1};\,\,D=\left[c_0 p \frac
{ \tr\{ \bfS^2 \} }{\tr^2\{\bfS\} } - \frac{K}{K-p} \right] .
\end{equation}
\end{lemma}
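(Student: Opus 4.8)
The plan is to follow the same route as in Lemma~\ref{lemma:precisionHS}: exhibit the QL risk as an explicit convex quadratic in the scalar pair $(\alpha,\beta)$ and minimize it through the associated $2\times2$ system of normal equations. First I would substitute $\hat\bfC^{\star}(\alpha,\beta)\bfS-\bfI_p=\alpha\,\hbS^{-1}\bfS+\beta\,\bfS-\bfI_p$ into the loss and expand the trace of the square, using cyclicity of $\tr$, to obtain
\begin{align*}
{\mathcal R}_{QL}(\alpha,\beta)=\;&\alpha^2\,E\{\tr\{\hbS^{-1}\bfS\hbS^{-1}\bfS\}\}+2\alpha\beta\,E\{\tr\{\hbS^{-1}\bfS^2\}\}\\
&+\beta^2\tr\{\bfS^2\}-2\alpha\,E\{\tr\{\hbS^{-1}\bfS\}\}-2\beta\,\tr\{\bfS\}+p.
\end{align*}
Since $\hat\bfC^{\star}\bfS$ is similar to the Hermitian positive-definite matrix $\bfS^{1/2}\hat\bfC^{\star}\bfS^{1/2}$, the loss equals $\sum_i(\lambda_i-1)^2\ge0$ for the real positive eigenvalues $\lambda_i$ of $\hat\bfC^{\star}\bfS$, so the risk is a convex quadratic in $(\alpha,\beta)$ whose minimizer is found by setting its gradient to zero.

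The randomness sits entirely in the three expectations of traces involving $\hbS^{-1}$, which I would evaluate from the moments of the inverse complex Wishart, recalling $K\hbS\EqualDist{\cal W}_p^C\{K,\bfS\}$ from (\ref{eq:eqnWish}). The first-order moment $E\{\hbS^{-1}\}=\tfrac{K}{K-p}\bfS^{-1}$ gives at once $E\{\tr\{\hbS^{-1}\bfS\}\}=Kp/(K-p)$ and $E\{\tr\{\hbS^{-1}\bfS^2\}\}=\tfrac{K}{K-p}\tr\{\bfS\}$. The one laborious term is $E\{\tr\{\hbS^{-1}\bfS\hbS^{-1}\bfS\}\}$, which requires the second-order inverse-Wishart identity
\begin{equation*}
E\{\tr\{\bW^{-1}\bfA\bW^{-1}\bfB\}\}=\frac{(K-p)\tr\{\bfS^{-1}\bfA\bfS^{-1}\bfB\}+\tr\{\bfS^{-1}\bfA\}\tr\{\bfS^{-1}\bfB\}}{(K-p)((K-p)^2-1)}
\end{equation*}
for $\bW=K\hbS$; taking $\bfA=\bfB=\bfS$ collapses the right side and yields $E\{\tr\{\hbS^{-1}\bfS\hbS^{-1}\bfS\}\}=K^3p/[(K-p)((K-p)^2-1)]=c_0\,p\,K/(K-p)$ with $c_0$ as in (\ref{eq:defC0}). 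This evaluation is the main obstacle, and it is exactly where the hypothesis $K>p+1$ enters: the factor $(K-p)^2-1$ must be positive for this second moment, and hence $c_0$, to be finite and positive.

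Finally I would write the stationarity conditions $\partial{\mathcal R}_{QL}/\partial\alpha=\partial{\mathcal R}_{QL}/\partial\beta=0$ as the linear system
\begin{equation*}
\begin{pmatrix}E\{\tr\{\hbS^{-1}\bfS\hbS^{-1}\bfS\}\}&E\{\tr\{\hbS^{-1}\bfS^2\}\}\\ E\{\tr\{\hbS^{-1}\bfS^2\}\}&\tr\{\bfS^2\}\end{pmatrix}\begin{pmatrix}\alpha\\\beta\end{pmatrix}=\begin{pmatrix}E\{\tr\{\hbS^{-1}\bfS\}\}\\\tr\{\bfS\}\end{pmatrix}
\end{equation*}
and solve by Cramer's rule. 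Substituting the evaluated moments, the shared determinant factors as $\tfrac{K}{K-p}\tr^2\{\bfS\}\,D$ with $D$ as in (\ref{eq:defC0}) (positive for $K>p+1$, confirming a genuine minimum), the $\alpha$-numerator reduces to $\tfrac{K}{K-p}(p\,\tr\{\bfS^2\}-\tr^2\{\bfS\})$, and the $\beta$-numerator, after combining $a\,\tr\{\bfS\}-E\{\tr\{\hbS^{-1}\bfS^2\}\}E\{\tr\{\hbS^{-1}\bfS\}\}$, collapses to a multiple of $c_0-K/(K-p)$; dividing through reproduces $\alpha_0$ and $\beta_0$ exactly as in (\ref{eq:PalpQL}). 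I expect this closing algebraic consolidation---in particular verifying that the $\beta_0$ numerator contracts to the clean factor $c_0-K/(K-p)$---to be the only fiddly step once the second inverse-Wishart moment is in hand.
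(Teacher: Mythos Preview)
Your proposal is correct and follows essentially the same route as the paper: evaluate the three inverse-Wishart trace moments (the paper uses its identity~(\ref{eq:thirdmom}) with $\bfA=\bfB=\bfS$ for the quadratic one, exactly as you do), set up the $2\times2$ linear system from the first-order conditions, solve it (you via Cramer's rule, the paper via elimination), and confirm the stationary point is a minimum. One small point to tighten: your assertion that $D>0$ for $K>p+1$ is not automatic from the hypothesis alone---it needs the Cauchy--Schwarz/Chebyshev inequality $p\,\tr\{\bfS^2\}\ge\tr^2\{\bfS\}$ combined with $c_0>K/(K-p)$, both of which the paper spells out in its Hessian check.
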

\begin{proof}
See Appendix \ref{app:PrecisQL}. 
\end{proof}
We shall call this the QLP method.
\begin{remark}  
We note that the numerator of $\alpha_0$ in (\ref{eq:PalpHS}) is essentially the $U$-statistic for testing for sphericity of $\bfC$
and in (\ref{eq:PalpQL}) for testing for sphericity of $\bfS.$ 
\end{remark}

Repeating the simulations in Section~\ref{subsec:pcohsim}\/ using the oracle estimates ${\hat\bfC}^{\star}(\alpha_0,\beta_0)$ for the precision matrices, and hence the partial coherencies, gave the results in Fig.~\ref{fig:PRISE_P} and lines 4 and 5 of Table~\ref{tab:avPRISE}. 
We see that HSP and HS are comparable for patient one for $K=12$ but for 
$K=14$ and $16,$ HSP does better. For individual two HSP is slightly worse than HS when $K=12.$ QLP performs much better than HSP, and for individual one 
performs intermediate to QLa and QLb.

\begin{figure}[t]
\centering
$
\begin{matrix}
\includegraphics{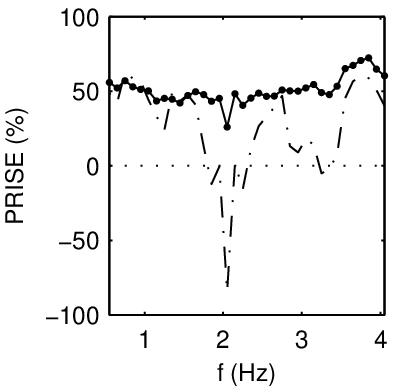}
\includegraphics{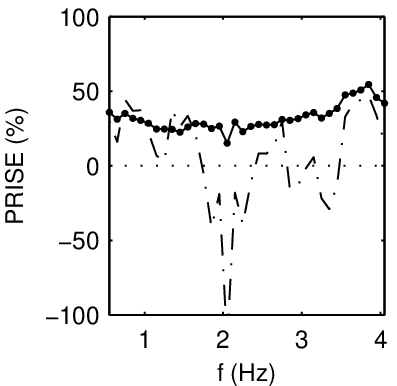}\\
\includegraphics{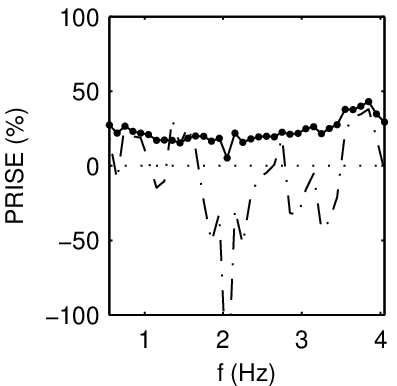}
\includegraphics{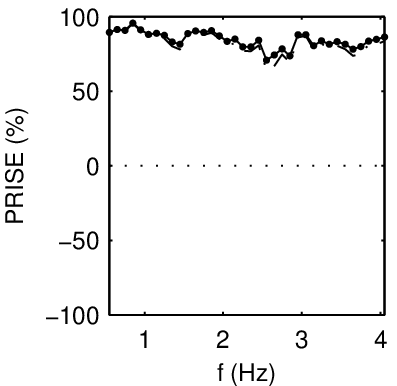}
\end{matrix}
$
\caption{Oracle PRISE with partial coherencies
derived from precision matrices.  HSP (dash-dot line) and QLP (line with bold dots)
for $K=12$ (top left), $K=14$ (top right),
$K=16$ (bottom left), all for individual one, and
$K=12$ (bottom right) for individual two.
}
\label{fig:PRISE_P}
\end{figure}

\section{Full (Non-Oracle) Estimation}\label{sec:nonoracle}
Each of the  estimators we have derived involve the traces of some subset of $\bfS, \bfS^2, \bfS^{-1},\bfS^{-2}.$ For the oracle estimators these were taken as known, but now we turn to the real-world case where these must also  be estimated.
Since the QL approach has performed uniformly better than HS for the oracle estimators we now only examine the three estimators QLa, QLb and QLP. 

Now QLP involves only $\tr\{ \bfS \}$ and $\tr\{ \bfS^2 \}$ whereas QLa and QLb involve   $\tr\{ \bfS^{-1} \}$ and $\tr\{ \bfS^{-2} \},$ which we would expect to be more problematic to estimate.

We recall that ${\hat\bfS}$ is the maximum likelihood estimator (MLE) for $\bfS.$ By the invariance property of MLEs \cite[p.~294]{CasellaBerger90} it follows that the MLE of $\tr\{ \bfS \}$ is $\tr\{ {\hat\bfS} \},$ of $\tr\{ {\bfS}^2\}$ is 
$\tr\{ {\hat\bfS}^2\},$ of $\tr\{ {\bfS}^{-1}\}$ is $\tr\{ {\hat\bfS}^{-1}\}$ and of $\tr\{ {\bfS}^{-2}\}$ is $ \tr\{ {\hat\bfS}^{-2}\}.$ The MLE is a strongly consistent estimator \cite[p.~129]{YoungSmith05}.
\begin{figure}[t]
\centering
$
\begin{matrix}
\includegraphics{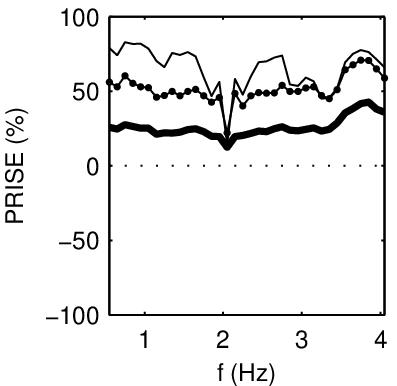}
\includegraphics{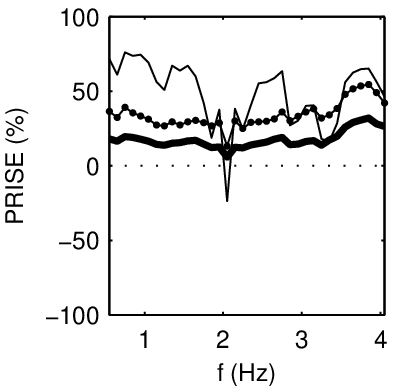}\\
\includegraphics{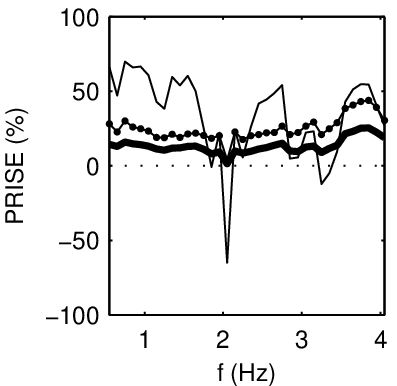}
\includegraphics{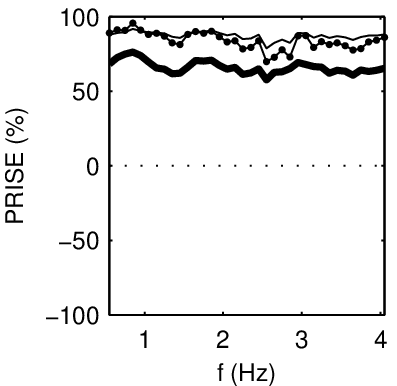}
\end{matrix}
$
\caption{PRISE with partial coherencies
derived from fully estimated spectral or precision matrices.  QLa-est (thick line), QLb-est (thin line) and QLP-est (line with bold dots)
for $K=12$ (top left), $K=14$ (top right),
$K=16$ (bottom left), all for individual one, and
$K=12$ (bottom right) for individual two.
}
\label{fig:PRISE_EST}
\end{figure}

\begin{itemize}
\item
To estimate $\tr\{ {\bfS}\}$ we use $\tr\{ {\hat\bfS}\}$ which by 
(\ref{eq:eqnSHRll})  is exactly unbiased.
\item
Our estimator for $\tr\{ {\bfS}^2\}$ is $\tr\{ {\hat\bfS}^2\}-(1/K) \tr^2\{ {\hat\bfS}\}.$
Using (\ref{eq:firstmom}) and (\ref{eq:secondmom}) we see that 
$$
E\left\{ \tr\{ {\hat\bfS}^2\}-\frac{1}{K} \tr^2\{ {\hat\bfS}\}  \right\}=\left[1-\frac{1}{K^2}\right]\tr\{ {\bfS}^2\},
$$
so the estimator  is asymptotically  unbiased for  
$\tr\{ {\bfS}^2\}.$
\item 
To estimate $\tr\{\bfS^{-1}\}$ we use $\left[1-\frac{p}{K}\right] \tr\{ {\hat\bfS}^{-1}\}.$
Result (\ref{eq:fifthmom}) shows this estimator is exactly unbiased (under the assumption $K>p).$
\item
Results (\ref{eq:thirdmom}) and (\ref{eq:fourthmom}) together show that 
\begin{equation}\label{eq:bigvar}
\left[1-\frac{p}{K}\right]^2 \tr\{ {\hat\bfS}^{-2}\}-\frac{1}{K} 
\left[1-\frac{p}{K}\right]\tr^2\{ {\hat\bfS}^{-1}\}
\end{equation}
is unbiased for $\tr\{ {\bfS}^{-2}\}$ with $K>p+1.$ However this estimator was found by simulation to have a very high variance, with occasional negative values. Better results were obtained by estimating $\tr\{ {\bfS}^{-2}\}$ using just the first term of  (\ref{eq:bigvar}), namely
$\left[1-\frac{p}{K}\right]^2 \tr\{ {\hat\bfS}^{-2}\},$ which is only asymptotically unbiased.
\end{itemize}
\smallskip\par
These estimators of the trace terms were used in the three estimators, now called QLa-est, QLb-est and QLP-est for this full estimation situation.
Repeating the simulations in Section~\ref{subsec:pcohsim}\/ 
for just the three estimators
gave the results in Fig.~\ref{fig:PRISE_EST} and lines 6 to 8 of Table~\ref{tab:avPRISE}. For individual one QLb-est performs best but at frequencies where the true partial coherence is high (2 and 3.25 Hz) it does poorly. It is generally quite variable.
QLP-est seems to deliver very significant improvements over the raw estimates, and whilst generally slightly inferior to QLb-est, is relatively frequency invariant and behaves better for  high partial coherencies. 
For individual two QLb-est slightly outperforms QLP-est, but both do very well in this sparse case. QLa-est is generally inferior to the other two but appears relatively unaffected by spikes of high true partial coherence.

In closing, we note that (\ref{eq:PalpQL}) and (\ref{eq:defC0}) involve only $\bfS$ and $\bfS^2$ which are more accurately estimated than their inverses
which appear in QLa-est and QLb-est; this undoubtedly adds greatly to the reliability and robustness of QLP-est. We also note that with $\beta_0$ in the form given in (\ref{eq:PalpQL}) that there is a $\tr\{\bfS\}$ in the denominator which is playing the role of $\tr\{ {\hat\bfS}\}$ in the previously accepted formulation of (\ref{eq:Haff}).
Hence 
we recommend QLP-est as our estimator of choice.

\section{Concluding Discussion}
We have considered how to improve estimation of partial coherencies from poorly-conditioned matrices. Our study carried out exact simulations derived from real EEG data for two individuals, one having large, and the other  small, partial coherencies. 
When true partial coherencies are quite large then for shrinkage estimators of the diagonal weighting kind  --- for spectral matrices or precision matrices --- minimization of risk using  QL leads to oracle partial coherence estimators  superior to HS equivalents. When true partial coherencies are small the methods behave similarly. For the full estimation (non-oracle) case,  QLP-est seems particularly robust and reliable and based  on the results here is our recommended approach.

\appendix{}
To simplify notation we drop  explicit frequency dependence. 
\subsection{Useful Expectations}\label{app:expect}
The first three of the following results are given in \cite{MaiwaldKraus00}. The third and fourth can be deduced from \cite[p.~308]{LetacMassam04}.
Let $\bfA, \bfB$ be arbitrary complex-valued matrices. Under (\ref{eq:eqnWish}),
\begin{eqnarray}
E\{ \tr\{ \bfA{\hat\bfS}\bfB{\hat\bfS} \} \} &=& \tr\{ \bfA\bfS\bfB\bfS \}\nonumber\\
&\,\,&+\textstyle{\frac{1}{K}} \tr\{ \bfA\bfS \}\tr\{ \bfB\bfS \}
\label{eq:firstmom}\\
E\{ \tr^2\{ {\hat\bfS} \} \} \!&=&\! \tr^2\{ \bfS \}+\textstyle{\frac{1}{K}} \tr\{ \bfS^2 \}
\label{eq:secondmom}\\
E\{ \tr\{ {\hat\bfS}^{-1} \} \} &=& \textstyle{\frac{K}{K-p}} \tr\{\bfS^{-1}\}\label{eq:fifthmom}\\
\!\!\!\!\!\!\!\!\!
E\{ \tr\{ \bfA{\hat\bfS}^{-1}\bfB {\hat\bfS}^{-1}\} \} &=& c_1\left[(K-p)\tr\{ \bfA\bfS^{-1}\bfB\bfS^{-1} \} \right.\nonumber \\
&\,\,&+\ \tr\{ \bfA{\bfS}^{-1}\}\tr\{ \bfB{\bfS}^{-1} \}]
\label{eq:thirdmom}\\
E\{ \tr^2\{ {\hat\bfS}^{-1} \} \} &=& c_1\left[(K-p)\tr^2\{ \bfS^{-1} \} \right. \nonumber\\
&\,\,&+\ \tr\{ {\bfS}^{-2} \}].
\label{eq:fourthmom}
\end{eqnarray}
Here (\ref{eq:firstmom}) and (\ref{eq:secondmom}) hold for $K>p-1,$ (\ref{eq:fifthmom}) holds for $K>p$ and (\ref{eq:thirdmom}) and (\ref{eq:fourthmom}) hold for $K>p+1.$ $c_1$ is given by
$$c_1=\frac{K^2}{(K-p)^3-(K-p)}.$$
\subsection{Derivative of trace of squared matrix}\label{app:diff}
If $x$ is a scalar, $\bfF(x)$ is an $m \times n$ matrix and $\bfG(x)$ is an $n\times q$ matrix, then \cite[p.~301]{Harville97}
$
{\partial (\bfF\bfG)}/{\partial x}=\bfF ({\partial \bfG}/{\partial x})+
 ({\partial \bfF}/{\partial x})\bfG.
$
Also, \cite[p.~304]{Harville97}
$
{\partial \,\tr\{\bfF\}}/{\partial x}=\tr\left\{ {\partial \bfF}/{\partial x} \right\}.
$
So,
\begin{eqnarray*}
\frac{\partial \,\tr\{\bfF\bfG\}}{\partial x} &=& 
\tr\left\{ 
\bfF \frac{\partial \bfG}{\partial x}+
 \frac{\partial \bfF}{\partial x}\bfG \right\}\\
&=& \tr\left\{ 
\bfF \frac{\partial \bfG}{\partial x}\right\}+
 \tr\left\{\bfG \frac{\partial \bfF}{\partial x}\right\}\\
&=& \tr\left\{ 
 \frac{\partial \bfG}{\partial x}\bfF\right\}+
 \tr\left\{ \frac{\partial \bfF}{\partial x}\bfG\right\},
\end{eqnarray*}
so if $\bfF=\bfG,$
\begin{equation}\label{eq:diffres}
\frac{\partial \,\tr\{\bfF^2\}}{\partial x}=
2\, \tr\left\{ 
 \frac{\partial \bfF}{\partial x}\bfF\right\}=2\,\tr\left\{ \bfF\frac{\partial \bfF}{\partial x}\right\}.
\end{equation}

\subsection{Proof of Lemma~\ref{lemma:QL2}}\label{app:rhoQLDet}
From (\ref{eq:riskQL}) the quadratic loss is  
${\mathcal R}_{QL}(\hat{\bfS}^\star(\rho), \bfS) = E\{\tr\{({\hat\bfS}^\star\bfS^{-1} - \bfI_p)^2\}\}.
$ 
So
$$\frac{\partial}{\partial \rho}{\mathcal R}_{QL}(\hat{\bfS}^\star(\rho), \bfS) = E\left\{\frac{\partial}{\partial \rho}\tr\{({\hat\bfS}^\star\bfS^{-1} - \bfI_p)^2\}\right\}.
$$ 
We now use (\ref{eq:diffres}) to find the derivative by setting 
\begin{eqnarray*}
\bfF={\hat\bfS}^\star\bfS^{-1} - \bfI_p=
[(1-\rho) {\hat{\bfS}}+\frac{\rho}{p}\tr\{\bfS\}\bfI_p] \bfS^{-1} - \bfI_p.
\end{eqnarray*}
Then,
\begin{eqnarray*}
\frac{\partial \bfF}{\partial \rho}\bfF &=&
\left(-{\hat\bfS}\bfS^{-1}+\frac{1}{p}\tr\{\bfS\}\bfS^{-1}\right)\\
&\times&\left([(1-\rho) {\hat{\bfS}}+\frac{\rho}{p}\tr\{\bfS\}\bfI_p] \bfS^{-1} - \bfI_p\right)\\
&=&-(1-\rho){\hat\bfS}\bfS^{-1}{\hat\bfS}\bfS^{-1}
-\frac{\rho}{p}\tr\{\bfS\}{\hat\bfS}\bfS^{-2}+{\hat\bfS}\bfS^{-1}\\
&+&\frac{(1-\rho)}{p}\tr\{\bfS\}\bfS^{-1}{\hat\bfS}\bfS^{-1}+\frac{\rho}{p^2}\tr^2\{\bfS\}\bfS^{-2}\\
&-&\frac{1}{p}\tr\{\bfS\}\bfS^{-1}.
\end{eqnarray*}
Using (\ref{eq:firstmom}),
\begin{eqnarray}
E\{   \tr\{\bfS^{-1}{\hat\bfS}\bfS^{-1}{\hat\bfS}\}\}&=&[ \tr\{\bfS^{-1}{\bfS}\bfS^{-1}{\bfS}\}+\frac{1}{K}\tr^2\{ \bfS^{-1}\bfS\}]\nonumber\\
&=& \left[p+\frac{p^2}{K}\right].
\label{eq:partone}
\end{eqnarray}
Using (\ref{eq:partone}) we obtain
\begin{eqnarray*}
\frac{\partial}{\partial \rho}{\mathcal R}_{QL}(\hat{\bfS}^\star(\rho), \bfS)\!\!\! &=&\!\!\!
E\left\{ 2\, \tr\left\{ 
 \frac{\partial \bfF}{\partial \rho}\bfF\right\}\right\}\\
&=&\!\!\!\!\!
[-2+2\rho]\!\left[p+\frac{p^2}{K}\right]-\frac{4\rho}{p}
\tr\{\bfS\}\tr\{\bfS^{-1}\}\\
&\quad&+2\frac{\rho}{p^2}\tr^2\{\bfS\}\tr\{\bfS^{-2}\}+2p.
\end{eqnarray*}
setting the result to zero and tidying gives
$$
\rho\left(\left[p+\frac{p^2}{K}\right]-\frac{2}{p}
\tr\{\bfS^{-1}\}\tr\{ \bfS\}+\frac{1}{p^2}\tr^2\{\bfS\}\tr\{\bfS^{-2}\}\right)=\frac{p^2}{K}
$$
which gives the expression (\ref{eq:rhoQLDet}) for $\rho.$ 

The second derivative is given by
\begin{eqnarray*}
\frac{\partial^2}{\partial \rho^2}{\mathcal R}_{QL}(\hat{\bfS}^\star(\rho), \bfS)\!\!\! 
&=&\!\!\!\!\!
2\left[p+\frac{p^2}{K}\right]-\frac{4}{p}
\tr\{\bfS\}\tr\{\bfS^{-1}\}\\
&\quad&+\frac{2}{p^2}\tr^2\{\bfS\}\tr\{\bfS^{-2}\}.
\end{eqnarray*}
We now write this as a quadratic in $\tr\{\bfS\},$ i.e., as
$
a\,\tr^2\{\bfS\}-b\,
\tr\{\bfS\}+c,
$
where 
$$a=\frac{2}{p^2}\tr\{\bfS^{-2}\};\quad
b=\frac{4}{p}\tr\{\bfS^{-1}\}; \quad
c=2\left[p+\frac{p^2}{K}\right].
$$
Next complete the square to obtain
$
\left(\sqrt a\, \tr\{\bfS\}- {b}/{[2\sqrt a]}\right)^2 -  {b^2}/{(4a)}+c.
$
This will be positive if $c- ({b^2}/{4a})$ is positive; i.e.,
$$
2\left[p+\frac{p^2}{K}\right]-2\frac{\tr^2\{\bfS^{-1}\}}{\tr\{\bfS^{-2}\}}>0.
$$
By Chebyshev's inequality we have
$
p\sum_{j=1}^p\lambda_j^{-2}\geq \left(\sum_{j=1}^p\lambda_j^{-1}\right)^2,
$
where the $\lambda_j$'s are eigenvalues of $\bfS.$ So 
$p \geq {\tr^2\{\bfS^{-1}\}}/{\tr\{\bfS^{-2}\}},
$
and so
$$
2\left[p+\frac{p^2}{K}\right]-2\frac{\tr^2\{\bfS^{-1}\}}{\tr\{\bfS^{-2}\}} \geq 2\frac{p^2}{K}>0,
$$
which proves that the turning point is a minimum.
\subsection{Proof of Lemma~\ref{lemma:twopar}}\label{app:twoparamorig}
For the HS loss we have that 
$({\partial}/{\partial \eta}){\mathcal R}_{HS}({\hat\bfS}^{\star}(\eta,\rho),\bfS)$ is given by $({\partial}/{\partial \eta})E\{\tr\{[(1-\rho){\hat\bfS}+\eta\rho \bfI_p - \bfS]^2\}\}$ which is 
\begin{eqnarray*}
E\{\tr\{2\rho\bfI_p[(1-\rho){\hat\bfS}+\eta\rho \bfI_p - \bfS]\}\}
\!\!&=&\!\! 2\rho\,\tr\{\eta\rho \bfI_p-\rho\bfS \}\nonumber\\
\!\!&=&\!\! 2\rho^2(\eta p -\tr\{\bfS\}),
\end{eqnarray*}
where we have used (\ref{eq:diffres}) in Appendix \ref{app:diff} and $E\{ {\hat\bfS} \}=\bfS.$
Then setting to zero gives
\begin{equation}
\eta_0 = \tr\{\bfS\} / p. \label{eq:alphaOptHS}
\end{equation}
Next,
$({\partial}/{\partial \rho}){\mathcal R}_{HS}(\hat{\bfS}^{\star}(\eta,\rho),\bfS)$ is given by
$({\partial}/{\partial \rho})E\{\tr\{[(1-\rho){\hat\bfS}+\eta\rho \bfI_p - \bfS]^2\}\}$ which is
\begin{eqnarray}
&&\!\!\!\!\!\!\!\!\!\!\!\!\!\!\!\!\!\!\!\!\!\!\!\!\!\!\!\!\!\!\!\!\!\!\!\!\!\!\!\!\!\!\!\!\!\!\!\!\!\!\!
E\{\tr\{2(\eta\bfI_p - {\hat\bfS})[(1-\rho){\hat\bfS}+\eta\rho \bfI_p - \bfS]\}\}\nonumber\\
&&\!\!\!\!\!\!\!\!\!\!\!\!\!\!\!\!\!\!\!\!\!\!\!\!\!\!\!\!\!\!\!\!\!\!\!\!\!\!\!= E\{\tr\{2(\eta\bfI_p - {\hat\bfS})[\rho(\eta\bfI_p - {\hat\bfS}) + {\hat\bfS} - \bfS]\}\}\nonumber\\
\Rightarrow
\rho_0 \DefEq \rho_0(\eta_0) &=& \frac{E\{\tr\{(\eta_0\bfI_p - {\hat\bfS})(\bfS- {\hat\bfS})\}\}}
{E\{\tr\{(\eta_0\bfI_p - {\hat\bfS})^2\}\}}. \label{eq:rhoOptHS}
\end{eqnarray}
The optimal shrinkage coefficient is dependent on the form of the target matrix, namely $\eta\bfI_p$. 
Look first at the numerator, $a$ say, of (\ref{eq:rhoOptHS}). Expanding gives
\begin{equation*}
a=E\{ \tr\{\eta_0\bfS-\eta_0{\hat\bfS} - {\hat\bfS}\bfS+ {\hat\bfS}^2\}\}.
\end{equation*}
Now $K\hbS$ has the complex Wishart distribution with mean $K\bfS.$
Since $E\{ {\hat\bfS}\}=\bfS$ the first two terms cancel.   Using (\ref{eq:secondmom}) we get
$$
a=-\tr\{ \bfS^2 \}+\tr\{ \bfS^2 \} + \frac{1}{K} \tr^2\{\bfS\}=\frac{1}{K} \tr^2\{\bfS\}.
$$
Expanding the denominator, $b$ say, of  (\ref{eq:rhoOptHS}) gives,
with $\eta_0=\tr\{ \bfS \}/p,$
\begin{eqnarray*}
b &=& E\{ \tr\{\eta_0^2\bfI_p - 2\eta_0{\hat\bfS}+ {\hat\bfS}^2 \}\}\\
&=& \frac{1}{p}\tr^2\{\bfS\}-\frac{2}{p}\tr^2\{\bfS\}+\tr\{ \bfS^2\}+ \frac{1}{K} \tr^2\{\bfS\}\\
&=& \left[ \frac{1}{K} -\frac{1}{p} \right]\tr^2\{\bfS\}+\tr\{ \bfS^2\}.
\end{eqnarray*}
The ratio $a/b$ then has the form (\ref{eq:HSrhoa}) or
(\ref{eq:theorHS}).

For the QL loss, we have that
$({\partial}/{\partial \eta}){\mathcal R}_{QL}({\hat\bfS}^{\star}(\eta,\rho),\bfS)$ is given by $({\partial}/{\partial \eta})E\{\tr\{ [(1-\rho){\hat\bfS}+\eta\rho\bfI_p]\bfS^{-1}-\bfI_p)^2\}\}$
which is
\begin{eqnarray}
&&\!\!\!\!\!\!\!\!\!\!\!\!\!\!\!\!\!\!\!\!\!\!\!\!\!\!\!\!\!\!\!\!\!\!\!\!\!\!\!\!\!\!E\{\tr\{2\rho\bfS^{-1}([(1-\rho){\hat\bfS}+\eta\rho \bfI_p ]\bfS^{-1}-\bfI_p)\}\}\nonumber\\
&=& -2\rho^2\tr\{\bfS^{-1}\}+2\rho^2\,\eta\, \tr\{\bfS^{-2}\},\nonumber\\
\Rightarrow \eta_0 &=& \tr\{\bfS^{-1}\}/\tr\{\bfS^{-2}\}. \label{eq:alphaOptQL}
\end{eqnarray}
Here we have again used (\ref{eq:diffres}) in Appendix \ref{app:diff} and $E\{ {\hat\bfS} \}=\bfS.$
This is very different in form to the scaling parameter (\ref{eq:alphaOptHS}) under HS loss.
Then $({\partial}/{\partial \rho}){\mathcal R}_{QL}({\hat\bfS}^{\star}(\eta,\rho),\bfS)$ is given by
$({\partial}/{\partial \rho})E\{\tr\{ [(1-\rho){\hat\bfS}+\eta\rho\bfI_p]\bfS^{-1}-\bfI_p)^2\}\}$
which is
\begin{eqnarray*}
&&\!\!\!\!\!\!E\{\tr\{2(\eta \bfI_p-{\hat\bfS})\bfS^{-1}([(1-\rho){\hat\bfS}+\eta\rho \bfI_p]\bfS^{-1}-\bfI_p)\}\}\\
&&=2 E\{ \tr\{\rho (\eta\bfI_p-{\hat\bfS})\bfS^{-1}(\eta\bfI_p-{\hat\bfS})\bfS^{-1}\\
&&+(\eta\bfI_p-{\hat\bfS})\bfS^{-1}({\hat\bfS}-\bfS)\bfS^{-1}\}\},\end{eqnarray*}
where we have again used (\ref{eq:diffres}) in Appendix \ref{app:diff}. Setting to zero gives
\begin{equation}
\rho_0\EqualDef \rho_0(\eta_0) = \frac{E\{\tr\{(\eta_0\bfI_p-{\hat\bfS})\bfS^{-1}
({\bfS-\hat\bfS})\bfS^{-1}\}\}
}{E\{\tr\{ (\eta_0\bfI_p-{\hat\bfS})\bfS^{-1}(\eta_0\bfI_p-{\hat\bfS})\bfS^{-1}\}\}}. \label{eq:rhoOptQL}
\end{equation}
Look first at the numerator, $a$ say, of (\ref{eq:rhoOptQL}). Expanding gives
\begin{equation*}
a=E\{ \tr\{\eta_0\bfS^{-1}-\eta_0\bfS^{-1}{\hat\bfS}\bfS^{-1} - {\hat\bfS}\bfS^{-1}+ {\hat\bfS}\bfS^{-1}{\hat\bfS}\bfS^{-1} \}\}.
\end{equation*}
Again, we use that $K\hbS$ has the complex Wishart distribution with mean $K\bfS.$
Since $E\{ {\hat\bfS}\}=\bfS$ the first two terms cancel. Then $E\{\tr\{{\hat\bfS}\bfS^{-1}\}\}=\tr\{\bfI_p\}=p.$  

From (\ref{eq:partone}) we know that 
$
E\{ \tr\{ {\hat\bfS}\bfS^{-1}{\hat\bfS}\bfS^{-1} \}\}=(p+(p^2/K)).
$
So  $a=-p+(p+(p^2/K))=p^2/K.$ 

Expanding the denominator, $b$ say, of  (\ref{eq:rhoOptQL}) gives
\begin{equation}\label{eq:proofden}
b=E\{ \tr\{\eta_0^2\bfS^{-2}-\eta_0\bfS^{-1}{\hat\bfS}\bfS^{-1} - \eta_0{\hat\bfS}\bfS^{-2}+ {\hat\bfS}\bfS^{-1}{\hat\bfS}\bfS^{-1} \}\}.
\end{equation}
Then
\begin{eqnarray*}
b&=&\frac{\tr^2\{\bfS^{-1}\}}{\tr^2\{\bfS^{-2}\}} \tr\{\bfS^{-2}\} -
2\frac{\tr^2\{\bfS^{-1}\}}{\tr\{\bfS^{-2}\}} + (p+(p^2/K))\\
&=& -\frac{\tr^2\{\bfS^{-1}\}}{\tr\{\bfS^{-2}\}} + (p+(p^2/K)).
\end{eqnarray*}
On tidying up the ratio $a/b$ is of the form (\ref{eq:theorQL}). That $\eta_0, \rho_0$ thus defined correspond to a minimum point is more easily shown via the proof of Lemma~\ref{lemma:reparam}.

\subsection{Proof of Lemma~\ref{lemma:reparam}}\label{app:twoparamnew}
Proceeding as before, and 
using (\ref{eq:secondmom}), 
for the HS loss
\begin{eqnarray*}
({\partial}/{\partial \alpha}){\mathcal R}_{HS}({\hat\bfS}^{\star}(\alpha,\beta),\bfS)&=&2[\alpha C +
\beta\tr\{ \bfS\}-\tr\{ \bfS^2\}]\\
({\partial}/{\partial \beta}){\mathcal R}_{HS}({\hat\bfS}^{\star}(\alpha,\beta),\bfS)&=&2[\alpha \tr\{ \bfS\}+\beta p-\tr\{ \bfS\}],
\end{eqnarray*}
where $C=\tr\{\bfS^2 \}+(1/K) \tr^2\{\bfS\}.$ Setting to zero and solving for $\beta$ gives
$$
\beta_0=
\frac{ \tr^3\{ \bfS\} }
{Kp \,\tr\{ \bfS^2 \} + [p-K]\tr^2\{ \bfS \} } = \rho_0\eta_0.
$$
Similarly, for $\alpha,$
$$
\alpha_0= \frac
{ p\, \tr\{ \bfS^2\}-\tr^2\{ \bfS\} }
{p[\tr\{ \bfS^2\}+ \frac{1}{K}\tr^2\{ \bfS\}]-\tr^2\{ \bfS\} } = 1-\rho_0.
$$
For the determinant of the Hessian matrix we have
$$
4\left|
\begin{matrix}
C & \tr\{ \bfS\}\\
\tr\{ \bfS\} & p
\end{matrix}
\right|_{\alpha_0,\beta_0} = 4p\, \tr\{ \bfS^2 \}-4[1-\frac{p}{K}]\tr^2\{\bfS\}.
$$
This is positive if $p\, \tr\{ \bfS^2 \} > [1-\frac{p}{K}]\tr^2\{\bfS\},$ i.e., $p > [1-\frac{p}{K}]\tr^2\{\bfS\}/\tr\{ \bfS^2 \},$ since $\tr\{ \bfS^2 \} >0.$ But by Chebyshev's inequality we know that
$p \geq \tr^2\{\bfS\}/\tr\{ \bfS^2 \},$ and so we know that $p > [1-\frac{p}{K}]\tr^2\{\bfS\}/\tr\{ \bfS^2 \},$ as required. Furthermore, 
$$
({\partial}^2/{\partial \beta^2}){\mathcal R}_{HS}({\hat\bfS}^{\star}(\alpha,\beta),\bfS)\Big|_{\alpha_0,\beta_0}=2p >0,
$$
and so we can conclude that the solution is a minimum.
For the QL loss,
\begin{eqnarray*}
({\partial}/{\partial \alpha}){\mathcal R}_{QL}({\hat\bfS}^{\star}(\alpha,\beta),\bfS)&=&2[\alpha D +
\beta\tr\{ \bfS^{-1}\}-p]\\
({\partial}/{\partial \beta}){\mathcal R}_{QL}({\hat\bfS}^{\star}(\alpha,\beta),\bfS)&=&2[\alpha \tr\{ \bfS^{-1}\}+\beta \tr\{ \bfS^{-2} \}\\
&-&2\tr\{ \bfS^{-1}\}],
\end{eqnarray*}
where $D=p[1+(p/K)].$ For the determinant of the Hessian matrix,
$$
4\left|
\begin{matrix}
D & \tr\{ \bfS^{-1}\}\\
\tr\{ \bfS^{-1}\} & \tr\{ \bfS^{-2} \}
\end{matrix}
\right|_{\alpha_0,\beta_0} = 4D\, \tr\{ \bfS^{-2} \}-4\tr^2\{\bfS^{-1}\}.
$$
This is positive if $D > \tr^2\{\bfS^{-1}\}/\tr\{ \bfS^{-2} \}, $ since $\tr\{ \bfS^{-2} \}>0.$
We know from Chebyshev's inequality that $p \geq \tr^2\{\bfS^{-1}\}/\tr\{ \bfS^{-2} \}, $ so  $p[1+(p/K)] > \tr^2\{\bfS^{-1}\}/\tr\{ \bfS^{-2} \}.$ Also we note that
$$
({\partial}^2/{\partial \alpha^2}){\mathcal R}_{QL}({\hat\bfS}^{\star}(\alpha,\beta),\bfS)\Big|_{\alpha_0,\beta_0}=2D >0,
$$
and so we can conclude that the solution is a minimum.

\subsection{Proof of Lemma~\ref{lemma:precisionHS}}\label{app:PrecisHS}
With ${\hat\bfC}^{\star}(\alpha,\beta)$ defined in
(\ref{eq:genshrinkc}) and ${\mathcal R}_{HS}(\hat{\bfC}^\star(\alpha,\beta), \bfC)$  we take 
$$\bfF={\hat\bfC}^{\star}\bfS-\bfS^{-1}=\alpha\,{\hat\bfS}^{-1}\bfS+\beta\,\bfS- \bfS^{-1}.
$$
Using (\ref{eq:fifthmom}) and (\ref{eq:thirdmom}) we find that
$({\partial}/{\partial \alpha}){\mathcal R}_{HS}({\hat\bfC}^{\star}(\alpha,\beta),\bfC)$ is given by
\begin{eqnarray}
&&\!\!\!\!\!\!\!\!\!\!\!\!\!\!\!\!\!\!\!\!\!\!\!\!\!\!\!\!\!\!\!\!\!\!\!\!
2\Big[\alpha c_1[(K-p)\tr\{\bfS^{-2}\}+\tr^2\{ \bfS^{-1} \}]\nonumber\\
&+&\beta{\textstyle{\frac{K}{K-p}}}\tr\{\bfS^{-1}\}-{\textstyle{\frac{K}{K-p}}}\tr\{\bfS^{-2}\}\Big]. \label{eq:HSPalpha}
\end{eqnarray}
Using (\ref{eq:fifthmom}) we find
$({\partial}/{\partial \beta}){\mathcal R}_{HS}({\hat\bfC}^{\star}(\alpha,\beta),\bfC)$ is given by
\begin{equation}
2\left[ \alpha \textstyle{\frac{K}{K-p}} \tr\{\bfS^{-1}\}+\beta p-\tr\{\bfS^{-1} \}\right].
\label{eq:HSPbeta}
\end{equation}
Setting (\ref{eq:HSPalpha}) and (\ref{eq:HSPbeta}) to zero and solving the simultaneous equations gives $\alpha_0$ and $\beta_0$ as stated in the lemma.
The determinant of the Hessian (divided by 4) is
$$
\delta=
\left|
\begin{matrix}
c_1 [(K-p) \tr\{\bfS^{-2}\}+\tr^2\{ \bfS^{-1}\}] & \frac{K}{K-p}\tr\{ \bfS^{-1}\}\\
\frac{K}{K-p}\tr\{ \bfS^{-1}\} & p 
\end{matrix}
\right|_{\alpha_0,\beta_0},
$$
which is 
$$\delta=
c_1 p[(K-p) \tr\{\bfS^{-2}\}+\tr^2\{ \bfS^{-1}\}]- \textstyle{\left[\frac{K}{K-p}\right]^2}\tr^2\{ \bfS^{-1}\}.
$$

Now $p\, \tr\{ \bfS^{-2} \} \geq \tr^2\{\bfS^{-1}\},$  so
\begin{eqnarray}
\delta &\!\!\!\!\!\!\!\!\!\geq&\!\!\!\!\!\!\!\!\! c_1 (K-p) \tr^2\{\bfS^{-1}\} + c_1p \tr^2\{\bfS^{-1}\}\!-\!\textstyle{\left[\frac{K}{K-p}\right]^2}\tr^2\{ \bfS^{-1}\}\nonumber\\
&=&\left(c_1 K - \textstyle{\left[\frac{K}{K-p}\right]^2}\right)\tr^2\{ \bfS^{-1}\}\nonumber\\
&=& \left(
\frac{pK^2(K-p)+K^2}{(K-p)^4-(K-p)^2}\right)
\tr^2\{ \bfS^{-1}\}>0,\nonumber
\end{eqnarray}
since $K>p+1.$ Combining the positive determinant with the fact that 
$({\partial^2}/{\partial \beta^2}){\mathcal R}_{HS}({\hat\bfC}^{\star}(\alpha,\beta),\bfC)=2p >0$ we see that the turning point is indeed a minimum. 
\subsection{Proof of Lemma~\ref{lemma:precisionQL}}\label{app:PrecisQL}
With ${\hat\bfC}^{\star}(\alpha,\beta)$ defined in
(\ref{eq:genshrinkc}) and ${\mathcal R}_{QL}(\hat{\bfC}^\star(\alpha,\beta), \bfC)$  we take 
$$\bfF={\hat\bfC}^{\star}\bfS-\bfI_p=\alpha\,{\hat\bfS}^{-1}\bfS+\beta\,\bfS- \bfI_p.
$$
Proceeding as before we find that
$({\partial}/{\partial \alpha}){\mathcal R}_{QL}({\hat\bfC}^{\star}(\alpha,\beta),\bfC)$ is given by
$$
2[\alpha 
E\{ \tr\{ {\hat\bfS}^{-1}\bfS {\hat\bfS}^{-1}\bfS \}\}\!+\! \beta E\{ \tr\{ {\hat\bfS}^{-1}\bfS^2\}\}\!-\!E\{\tr\{ {\hat\bfS}^{-1}\bfS \}\}].
$$ 
Setting $\bfA=\bfB=\bfS$ in  (\ref{eq:thirdmom}),
$$
E\{ \tr\{ {\hat\bfS}^{-1}\bfS {\hat\bfS}^{-1}\bfS \}\}=\frac{K}{K-p}{\frac{pK^2}{(K-p)^2-1}}=\frac{K}{K-p}c,
$$
where $c=pK^2/[(K-p)^2-1].$ 
Next we use the result that $E\{ {\hat\bfS}^{-1} \}=(K/(K-p))\bfS^{-1},$ which means that
$$
E\{ \tr\{ {\hat\bfS}^{-1}\bfS^2\}\}\!=\!\frac{K}{K-p}\tr\{ \bfS\}\,\mbox{and}\, E\{\tr\{ {\hat\bfS}^{-1}\bfS \}\}
\!=\! \frac{Kp}{K-p}.
$$
Putting results together we see that
$$
\frac{\partial}{\partial \alpha}{\mathcal R}_{QL}({\hat\bfC}^{\star}(\alpha,\beta),\bfC)=2 \frac{K}{K-p}\left[\alpha c+\beta \tr\{ \bfS\}-p\right].
$$
Next, $({\partial}/{\partial \beta}){\mathcal R}_{QL}({\hat\bfC}^{\star}(\alpha,\beta),\bfC)$ is given by
\begin{eqnarray*}
&&2[\alpha E\{\tr\{ \bfS {\hat\bfS}^{-1}\bfS\}\}+\beta E\{ \tr\{ \bfS^2 \}\}-E\{\tr\{ \bfS\}\}]\\
&&=2[ \alpha \frac{K}{K-p} \tr\{ \bfS\} +\beta 
\tr\{ \bfS^2 \}-\tr\{ \bfS\} ].
\end{eqnarray*}
Setting both partial derivatives to zero and removing constant multipliers gives the simultaneous equations
\begin{eqnarray}
\alpha c+\beta \tr\{ \bfS\}-p &=& 0 \label{eq:firstsim}\\
\alpha \frac{K}{K-p} \tr\{ \bfS\} +\beta 
\tr\{ \bfS^2 \}-\tr\{ \bfS\} &=& 0 \label{eq:secondsim}
\end{eqnarray}
Multiply (\ref{eq:firstsim}) by $K\tr\{\bfS\}/(K-p)$ and (\ref{eq:secondsim}) by $c.$ Subtracting, with $c=c_0 p,$ gives $\beta_0$ in 
(\ref{eq:PalpQL}) and (\ref{eq:defC0}).
Multiply (\ref{eq:firstsim}) by $\tr\{\bfS^2\}$ and (\ref{eq:secondsim}) by $\tr\{\bfS\}.$ Subtracting,  gives $\alpha_0$ in (\ref{eq:PalpQL}) and (\ref{eq:defC0}).

The determinant of the Hessian (divided by 4) is
$$
\left|
\begin{matrix}
\frac{K}{K-p}c & \frac{K}{K-p}\tr\{ \bfS\}\\
\frac{K}{K-p}\tr\{ \bfS\} & \tr\{ \bfS^{2} \}
\end{matrix}
\right|_{\alpha_0,\beta_0} \!\!\!= \textstyle{\frac{K}{K-p}}[c 
\tr\{ \bfS^{2} \}-\textstyle{\frac{K}{K-p}}
\tr^2\{\bfS\}].
$$
The term in the square bracket on the right-side can be rewritten as
\begin{equation}\label{eq:Hessproof}
\frac{pK^2}{(K-p)^2-1}\tr\{\bfS^2\}-\frac{K}{K-p}\tr^2\{\bfS\}
\end{equation}
We know that $p \tr\{\bfS^2\} \geq \tr^2\{\bfS\},$ so (\ref{eq:Hessproof}) will be positive if
$K^2/[(K-p)^2-1]- K/(K-p)>0.$ This term can be written as $[Kp(K-p)+K]/[(K-p)^3-(K-p)]>0$ since we are assuming $K>p+1.$ Combining the positive determinant with the fact that 
$({\partial^2}/{\partial \beta^2}){\mathcal R}_{QL}({\hat\bfC}^{\star}(\alpha,\beta),\bfC)=2\tr\{ \bfS^2 \} >0$ we see that the turning point is indeed a minimum. 
\section*{Acknowledgment}
The work of Deborah Schneider-Luftman was supported by EPSRC (UK). 

%
%
%
%
%
%
%
%
%
%
%
%
%
%
%
%
%
%
%
%
%
%
%
%
%
%
%
%
%
%

\begin{thebibliography}{10}

%
%
%

%
\bibitem{Carlson88}
B.~D.~Carlson, 
``Covariance matrix estimation errors and diagonal loading in adaptive arrays,''
{\it IEEE Trans. on Aerospace and Electronic Systems}, 
vol.~24, pp.~397--401, 1988.


%
\bibitem{CasellaBerger90}
G.~Casella and R.~L.~Berger,
{\it Statistical Inference}.
Belmont, CA: Duxbury, 1990.

\bibitem{ChandnaWalden11}
S. Chandna and A.~T. Walden,
``Statistical properties of the estimator of the rotary coefficient,''
{\it IEEE Trans. Signal Process.,} vol.~59, pp.~1298--1303, 2011.


\bibitem{ChandnaWalden13}
S. Chandna and A.~T. Walden,
``Simulation methodology for inference on physical parameters of complex vector-valued signals,''
{\it IEEE Trans. Signal Process.,} vol.~60, pp.~5260--5269, 2013.

\bibitem{ChenWangMcKeown12}
X.~Chen, Z.~Jane~Wang, and M.~J.~McKeown,
``Shrinkage-to-tapering estimation of large covariance matrices,''
{\it IEEE Trans. Signal Process.,} vol.~60, pp.~5640--5656, 2012.


\bibitem{Dahlhaus00}
R.~Dahlhaus,
``Graphical interaction models for multivariate time series,''
{\it Metrika}, vol.~51, pp.~157-172, 2000.

\bibitem{DanielsKass01}
M.~J.~Daniels and R.~E.~Kass,
``Shrinkage estimators for covariance matrices,''
{\it Biometrics}, vol.~57, pp.~1173--1184, 2001.

\bibitem{EfronMorris76}
B.~Efron and C.~Morris,
``Multivariate empirical Bayes and estimation of covariance matrices,''
{\it The Annals of Statistics}, vol.~4, pp.~22--32, 1976.

\bibitem{FisherSun11}
T.~J.~Fisher~and X.~Sun,
``Improved Stein-type shrinkage estimators for the high-dimensional normal covariance matrix,''
{\it Computational Statistics and Data Analysis},
vol.~55, pp.~1909--18, 2011.


%
\bibitem{Fiecasetal10}
M.~Fiecas, H.~Ombao,  C.~Linkletter, W.~Thompson and J.~Sanes,  
``Functional connectivity: shrinkage estimation and randomization test,''
{\it NeuroImage}, vol.~49, pp.~3005--3014, 2010.

%
\bibitem{Fraser77}
G.~J.~Fraser,
``The 5-day wave and ionospheric absorption,''
{\it J. Atmospheric and Terrestrial Physics, }
vol.~39, pp.~121--124, 1977.

\bibitem{Goodman63}
N.~R.~Goodman,
``Statistical analysis based on a certain multivariate complex Gaussian distribution (an introduction),''
{\it Ann. Math. Statist.},
vol.~34, pp.~152--77, 1963.



\bibitem{Haff79}
L.~R.~Haff,
``Estimation of the inverse covariance matrix: random mixtures of the inverse Wishart matrix and the identity,''
{\it The Annals of Statistics},
 vol.~7, pp.~1264--1276, 1979.

\bibitem{Haff80}
L.~R.~Haff,
``Empirical Bayes estimation of the multivariate normal covariance
matrix,''
{\it The Annals of Statistics},
 vol.~8, pp.~586--597, 1980.

\bibitem{Harville97}
D.~A.~Harville,
{\it Matrix Algebra From A Statistician's Perspective}.
New Yok: Springer-Verlag, 1997.


\bibitem{JamesStein61}
W.~James and C.~Stein,  
``Estimation with quadratic loss,'' in {\it Proceedings of the Fourth Berkeley Symposium on Mathematical Statistics and Probability, Volume 1: Contributions to the Theory of
Statistics}, Berkeley, CA: University of California Press, pp.~361--379, 1961.

\bibitem{Konno09}
Y.~Konno,
``Shrinkage estimators for large covariance matrices in multivariate real and complex normal distributions under an invariant quadratic loss,''
{\it Journal of Multivariate Analysis}, vol.~100, pp.~2237--2253, 2009.

\bibitem{Koutitonsky_etal02}
V.~G.~Koutitonsky, N.~Navarro and D.~Booth,
``Descriptive physical oceanography of Great-Entry Lagoon, Gulf of St. Lawrence,''
{\it  Estuarine, Coastal and Shelf Science}, vol.~54, pp.~833--847, 2002.

%
\bibitem{Larsen_etal00}
Larsen,~P.~D., Lewis,~C.~D., Gebber.~G.~L. \& Zhong,~S., 
``Partial spectral analysis of cardiac-related sympathetic nerve discharge,''
{\it J.~Neurophysiol.},  vol.~84, pp.~1168--79, 2000.

%
\bibitem{LedoitWolf04}
O.~Ledoit~and M.~Wolf,
``A well-conditioned estimator for large-dimensional covariance matrices,''
{\it Journal of Multivariate Analysis}
vol.~88, pp.~365--411, 2004.


\bibitem{LetacMassam04}
G.~Letac and H.~Massam,
``All invariant moments of the Wishart distribution,''
{\it Scand. J. of Stat.}, vol.~31, pp.~295--318, 2004.

%
\bibitem{MaiwaldKraus00}
D.~Maiwald~and D.~Kraus,
``Calculation of moments of complex Wishart and complex inverse Wishart distributed matrices,''
{\it IEE Proceedings Radar, Sonar Navigation},
vol.~147, pp.~162--168, 2000.

\bibitem{Medkouretal09}
T.~Medkour, 
A.~T.~Walden and A.~Burgess,
``Graphical modelling for brain connectivity via partial coherence,'' 
{\it J. Neurosci. Meth.} vol.~180, pp.~374--383, 2009.

\bibitem{Medkouretal10}
T.~Medkour, A.~T.~Walden, A.~P.~Burgess \& V.~B.~Strelets,
``Brain connectivity in positive and negative syndrome schizophrenia,''
{\it Neuroscience}, vol.~169, pp.~1779--88, 2010.



\bibitem{Mimaetal00}
T.~Mima,  T.~Matsuoka and M.~Hallett, 
``Functional coupling of right and left cortical motor areas demonstrated with partial coherence analysis,''
{\it Neurosci. Lett.}, 
vol.~287, pp.~93--6, 2000.



\bibitem{PercivalWalden93}
D.~B.~Percival and A.~T.~Walden, {\it Spectral Analysis for Physical Applications}. Cambridge, UK: Cambridge University Press, 1993.

%
\bibitem{RiedelSidorenko95}
K.~S.~Riedel~and A.~Sidorenko,
``Minimum bias multiple taper spectral estimation,''
{\it IEEE Trans. Signal Process.}
vol.~43, pp.~188--195.


\bibitem{Salvador05}
R.~Salvador, 
J.~Suckling, 
C.~Schwarzbauer and
E.~Bullmore,  
``Undirected graphs of frequency-dependent functional connectivity in
whole brain networks,''
{\it Phil. Trans. Roy. Soc. Lond., Ser. B,} 
vol.~360, pp.~937--46, 2005.

\bibitem{SchaferStrimmer05}
J.~Sch{\"a}fer and K.~Strimmer, 
``A shrinkage approach to large-scale covariance
matrix estimation and implications for functional genomics,''
{\it Statist. Appl. Genet. Molec. Biol.,} vol~4, no. 1, 2005.


%
\bibitem{Stein75}
C.~Stein, ``Estimation of a covariance matrix.'' Rietz lecture, 39th Annual Meeting IMS. Atlanta,
GA, 1975.
%

%
\bibitem{Waldenetal95}
A.~T.~Walden, E.~J.~McCoy and D.~B.~Percival,
``The effective bandwidth of a multitaper spectral estimator,''
{\it Biometrika},
vol.~82, 201--214, 1995.

\bibitem{WaldenSLuftman15}
A.~T. Walden and D.~Schneider-Luftman,
``Random matrix derived shrinkage of spectral precision matrices,''
{\it IEEE Trans. Signal Process.,} vol.~63, pp.~4689--4699, 2015.

\bibitem{WangPanTongZhu15}
C.~Wang, G.~Pan, T.~Tong and L.~Zhu,
``Shrinkage estimation of large dimensional precision matrix using random matrix theory,''
{\it Statistica Sinica}, vol.~25, pp.~993--1008, 2015.


\bibitem{White84}
R.~E.~White,
``Signal and noise estimation from seismic reflection data using spectral coherence methods,''
{\it Proc. IEEE},
vol.~72, pp.~1340-1356, 1984.

%
\bibitem{YoungSmith05}
G.~A.~Young and R.~L.~Smith, {\it Essentials of Statistical Inference}.
Cambridge UK: Cambridge University Press, 2005.



\end{thebibliography}
%
%
%
%
%

\end{document}